\documentclass[amscd,amssymb,verbatim,10pt]{amsart}
\usepackage[fontsize = 10bp]{fontsize}
\usepackage{scalerel}
\usepackage{stackengine,wasysym}
\usepackage{bbm}

\usepackage{amssymb,latexsym, amsmath, amscd, array, graphicx, pb-diagram}
\usepackage{hyperref}
\usepackage{color}

\usepackage{tikz-cd}

\usepackage{color}
\hypersetup{
    colorlinks=true,
    linkcolor=red,
    urlcolor=black,
    citecolor=blue
           }

\usepackage{amsthm}
\usepackage{hyperref}
\usepackage[all]{xy}
\usepackage[scr]{rsfso}

\swapnumbers

\theoremstyle{plain}

\newtheorem{thm}{Theorem}[section]
\newtheorem{theorem}[thm]{Theorem}

\newtheorem{lemma}[thm]{Lemma}

\newtheorem{proposition}[thm]{Proposition}
\newtheorem{corollary}[thm]{Corollary}
\newtheorem{example}[thm]{Example}

\theoremstyle{definition}

\newtheorem{remark}[thm]{Remark}

\newtheorem{ex}[thm]{Example}


 \newcommand{\Wi}{\widetilde}

\DeclareMathOperator{\cat}{{\mathsf{cat}}}

\DeclareMathOperator{\rank}{{\mathsf{rank}}}
\DeclareMathOperator{\cu}{{\mathsf{cup}}}
\DeclareMathOperator{\dcat}{{\mathsf{dcat}}}

\DeclareMathOperator{\cd}{{\mathsf{cd}}}

\usepackage{enumitem}

\def\Im{\protect\operatorname{Im}}

\makeatletter
\def\@tocline#1#2#3#4#5#6#7{\relax
  \ifnum #1>\c@tocdepth 
  \else
    \par \addpenalty\@secpenalty\addvspace{#2}%
    \begingroup \hyphenpenalty\@M
    \@ifempty{#4}{%
      \@tempdima\csname r@tocindent\number#1\endcsname\relax
    }{%
      \@tempdima#4\relax
    }%
    \parindent\z@ \leftskip#3\relax \advance\leftskip\@tempdima\relax
    \rightskip\@pnumwidth plus4em \parfillskip-\@pnumwidth
    #5\leavevmode\hskip-\@tempdima
      \ifcase #1
       \or\or \hskip 1em \or \hskip 2em \else \hskip 3em \fi%
      #6\nobreak\relax
    \hfill\hbox to\@pnumwidth{\@tocpagenum{#7}}\par
    \nobreak
    \endgroup
  \fi}
\makeatother




\def\scr{\mathcal}

\def\B{{\scr B}}
\def\O{{\scr O}}
\def\C{{\mathbb C}}
\def\Z{{\mathbb Z}}
\def\Q{{\mathbb Q}}
\def\R{{\mathbb R}}

\def\rpn{{\R P^n}}

\def\1{\hbox{\rm\rlap {1}\hskip.03in{\rom I}}}
\def\Bbbone{{\rm1\mathchoice{\kern-0.25em}{\kern-0.25em}
{\kern-0.2em}{\kern-0.2em}I}}


\def\wt{\widetilde}

\def\cZ{\mathcal{Z}}

\usepackage{comment}

\long\def\forget#1\forgotten{} %

\newcommand\ver[1]{\marginpar{\tiny Changed in Ver \VER}}

\date{\today}

\begin{document}

\begin{abstract}
We show that in the presence of a geometric condition such as non-negative Ricci curvature, the distributional category of a manifold may be used to bound invariants, such as the first Betti number and macroscopic dimension, from above. Moreover, \`a la Bochner, when the bound is an equality, special constraints are imposed on the manifold. We show that the distributional category of a space also bounds the rank of the Gottlieb group, with equality imposing constraints on the fundamental group. These bounds are refined in the setting of cohomologically symplectic manifolds, enabling us to get specific computations for the distributional category and LS-category.
\end{abstract}

\title[Bochner-type Theorems for Distributional Category]{Bochner-type Theorems for Distributional Category}

\author[E.~Jauhari]{Ekansh~Jauhari}

\author[J.~Oprea]{John~Oprea}

\address{Ekansh Jauhari, Department of Mathematics, University of Florida, 358 Little Hall, Gainesville, FL 32611, USA.}

\email{ekanshjauhari@ufl.edu}

\address{John Oprea, Department of Mathematics, Cleveland State University, Cleveland, OH 44115, USA.}

\email{jfoprea@gmail.com}

\subjclass[2020]
{Primary
57N65,  
Secondary
55M30, 
20J06, 
53C23. 
}

\keywords{Distributional category, LS-category, non-negative Ricci curvature, symplectic manifold, Gottlieb group.}

\maketitle

\section{Introduction}
While Lusternik--Schnirelmann (LS) category was originally invented to provide an estimate for the number of
critical points of a smooth function on a manifold, it soon became apparent that the concept connected
in non-trivial ways with other topological invariants (see~\cite{CLOT}). The more recent development of the idea of
topological complexity and its relation to the motion planning problem (see~\cite{Far}) spawned a re-evaluation of
LS-category in that context, and this resulted in a new ``categorical'' notion called \emph{distributional category}
(see~\cite{DJ} and~\cite{KW}, where the term \emph{analog category} is used in the latter). LS-category and distributional category are
most certainly different. For instance, the LS-category of any real projective space $\rpn$ is $n$ while the distributional
category is $1$ for all $n$. Yet, intriguingly, sometimes $\dcat(X)=\cat(X)$ (where we have used the obvious notations).
In particular, from~\cite{KW} (also see~\cite{Dr2}), we know that for a torsion-free discrete group $\pi$ with finite cohomological
(and homotopy) dimension $k$, we have $\dcat(\pi)=\cat(\pi)=k$. In~\cite{Ja}, the first author gave some conditions under which $\dcat(M)=\cat(M)$ holds when $M$ is a closed (not necessarily aspherical) manifold. In this paper, we would
like to explore other situations where the above equality holds, or at least where natural topological or geometric conditions imply
something interesting about $\dcat$. We refer to these conditions as \emph{Bochner-type conditions} for reasons
that will become apparent below.

The results of our paper suggest close relationships between distributional category and various classical invariants of interest, 
such as the first Betti number, the Gottlieb group, LS-category, macroscopic dimension, etc. Furthermore, $\dcat$ affords us an opportunity to improve 
upon earlier results in the literature that connect these classical invariants (see, for example,~\cite{Op1},~\cite{Op2},~\cite{OS}, and~\cite{DDJ}). Our results are strongest in the setting of $c$-symplectic manifolds and non-negatively curved manifolds, where they also give us 
various new estimates and computations of $\dcat$. In particular, unlike the results of~\cite{DJ} and~\cite{Ja}, we obtain first 
examples of \emph{inessential manifolds} $M$ (in the sense of~\cite{Gr}) for which $\dcat(M)=\cat(M)$ holds due to various 
topological, group-theoretical, and geometric conditions rather than purely cohomological ones.

Just as LS-category has found applications in many areas of topology, geometry, and
critical point theory, we hope that the results of our paper will hint at similar interconnectedness of distributional category, and at least give some intuition about its behavior
on spaces with extra structures. We also mention several directions for further study that the reader may find interesting (see, for instance,  Remark~\ref{rem:pi1Bieberbach}).

In this paper, all topological spaces are assumed to be path-connected. 

\section{Preliminaries}
In order to make this paper somewhat self-contained, we must review some foundational topics.
\subsection{Topological basics} 
Let $R$ be a commutative ring with unity. 
Then the \emph{cup-length} of the cohomology ring $H^*(X;R)$, denoted $c\ell_R(X)$, is defined to be the largest integer $k$ 
for which there exist $\alpha_i\in H^{s_i}(X;R)$, $1\le i\le k$, $s_i>0$,
such that $\alpha_1\alpha_2\cdots\alpha_k\ne 0$. In this paper, we reserve the notation $\cu(X)$ for the rational cup-length $c\ell_{\Q}(X)$. Let $\Gamma$ be a discrete group. Then its \textit{cohomological dimension}, denoted $\cd(\Gamma)$, is defined to be the largest integer $k$
such that $H^k(\Gamma,A)\ne 0$ for some $\Z\Gamma$-module $A$. 
The Eilenberg--Ganea theorem and the Stallings--Swan theorem (see~\cite{Br}) together say that for $\cd(\Gamma)=k \neq 2$, there is an
Eilenberg--Mac~Lane space $K(\Gamma,1)$ of dimension $k$, and this is the smallest such dimension.

\subsection{LS-category}
The \emph{Lusternik--Schnirelmann category} (LS-category) of a space $X$, denoted $\cat(X)$, is defined to be the smallest integer
$n$ for which there is a covering $\{U_i\}$ of $X$ by $n+1$ open sets that are contractible in $X$. In particular,
$\cat(X)=0$ if and only if $X$ is contractible. The following are some well-known facts about Lusternik--Schnirelmann
category, whose proofs can be found in~\cite{CLOT}.
\begin{theorem}\label{clotth}
Let $X$ be CW a complex.
\vspace{-1.5mm}
\begin{enumerate}
\itemsep -0.4 em
    \item If $Y$ is homotopy equivalent to $X$ (i.e., $Y\simeq X$), then $\cat(Y)=\cat(X).$
        \item If $X$ is $r$-connected for $r\ge 0$, then $\cat(X)\le\tfrac{\dim(X)}{r+1}$.
        \item For any CW complex $Y$, $\cat(X\times Y)\le \cat(X)+\cat(Y)$.
        \item If $X'$ is a covering space of $X$, then $\cat(X')\le \cat(X)$.
        \item For any ring $R$, $c\ell_R(X)\le\cat(X)$. In particular, $\cu(X)\le\cat(X)$.
\end{enumerate}
\end{theorem}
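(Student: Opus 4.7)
The plan is to establish each of the five properties using standard techniques from categorical cover theory, all ultimately resting on the single observation that categoricality of an open set is well-behaved under pullback along continuous maps. For item (i), I would show that any continuous $f \colon Y \to X$ pulls back categorical open sets in a controlled way: if $U \subset X$ is contractible in $X$ via a null-homotopy $H \colon U \times I \to X$, then the inclusion $f^{-1}(U) \hookrightarrow Y$ composed with $f$ is null-homotopic. When $f$ admits a homotopy inverse $g$, composing with $g$ and using $g \circ f \simeq \id_Y$ shows that $f^{-1}(U)$ is itself contractible in $Y$. Thus $f^{-1}$ sends a categorical cover of $X$ to a categorical cover of $Y$ of the same size, and the symmetric argument yields $\cat(X) = \cat(Y)$. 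Item (iv) uses essentially the same idea combined with unique path-lifting: for a covering map $p \colon X' \to X$ and a categorical open set $U \subset X$, each sheet of $p^{-1}(U)$ lifts the contracting homotopy of $U$, so $p^{-1}(U)$, being the disjoint union of such sheets, is categorical in $X'$ and the cover size is preserved.

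For item (v), I would invoke the cohomology long exact sequence of each pair $(X, U_i)$. Since $U_i$ is contractible in $X$, the pullback $H^k(X; R) \to H^k(U_i; R)$ vanishes for $k > 0$, so any $\alpha_i \in H^{s_i}(X; R)$ with $s_i > 0$ lifts to a relative class $\widetilde{\alpha}_i \in H^{s_i}(X, U_i; R)$. Their relative cup product $\widetilde{\alpha}_0 \smile \cdots \smile \widetilde{\alpha}_n$ then lives in $H^*(X, U_0 \cup \cdots \cup U_n; R) = H^*(X, X; R) = 0$, which forces $\alpha_0 \smile \cdots \smile \alpha_n = 0$ in $H^*(X; R)$. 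Hence $c\ell_R(X) \le \cat(X)$, and specializing to $R = \Q$ gives $\cu(X) \le \cat(X)$.

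Items (ii) and (iii) are the most technical and I expect them to present the main obstacles. For (ii), I would work cellularly: using the $r$-connectedness of $X$, the $(r+1)$-skeleton $X^{(r+1)}$ is contractible in $X$, and one then proceeds inductively by thickening successive blocks of $r+1$ consecutive dimensions into open neighborhoods that remain contractible in $X$, producing a categorical cover of size $\lceil \dim(X)/(r+1) \rceil$. For (iii), the naive product cover $\{U_i \times V_j\}$ consists of $(n+1)(m+1)$ pieces, so one must group these along the diagonals $i+j=k$ for $k=0, 1, \ldots, n+m$. The subtlety is that the pieces of a single diagonal overlap, so one first slightly shrinks the factor covers and then combines the pieces of each diagonal into a single categorical open set via a ``staircase'' disjointification. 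Navigating these combinatorial overlap issues in (iii) and the connectivity-dimension bookkeeping in (ii) is where the real work lies, while (i), (iv), and (v) are essentially formal consequences of the definitions and the functoriality of the constructions involved.
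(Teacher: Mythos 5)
This statement is quoted background: the paper gives no proof of its own and simply cites \cite{CLOT}, so the comparison is with the standard proofs there. Your arguments for (1), (4) and (5) are essentially the standard ones and are fine: (1) is the usual pullback of categorical sets along a homotopy equivalence, and (5) is the classical relative cup product argument. In (4) one small repair is needed: a categorical open set $U$ need not be evenly covered, so you should not argue ``sheet by sheet''; instead lift the null-homotopy of the composite $p^{-1}(U)\times I\to U\times I\to X$ all at once using the homotopy lifting property of the covering, note that it ends in the discrete fiber, and then use local path-connectedness of $X'$ to slide the resulting locally constant map to a single constant. Your plan for (3) is also the textbook route (diagonal sets $W_k=\bigcup_{i+j=k}U_i\times V_j$ after shrinking the covers, using normality/paracompactness of CW complexes to disjointify within each diagonal); you have not carried it out, but the strategy is the correct one.

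The genuine gap is in (2). First, your starting claim is false: $r$-connectedness does \emph{not} make the $(r+1)$-skeleton contractible in $X$ --- take $X=S^{r+1}$, whose $(r+1)$-skeleton is $X$ itself; only the $r$-skeleton is contractible in $X$ (a map from a complex of dimension $\le r$ into an $r$-connected space is null-homotopic). With that corrected, the bookkeeping (one set for the $r$-skeleton plus $\lfloor \dim(X)/(r+1)\rfloor$ blocks of $r+1$ consecutive cell dimensions) does give the right count, but the heart of the matter --- that a thickened block $X^{(m+r+1)}\setminus X^{(m)}$ of cells spanning $r+1$ consecutive dimensions is contractible in $X$ --- is precisely the content of the theorem and is nowhere justified; unlike a single cell dimension, such a block is not a disjoint union of open cells, and its contractibility in $X$ is not apparent. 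The standard proof avoids this entirely: one uses the Whitehead fat-wedge or Ganea-fibration characterization of $\cat$, observes that when $X$ is $r$-connected the fiber of the $k$-th Ganea fibration (a join of $k+1$ copies of $\Omega X$) is $\bigl((k+1)(r+1)-2\bigr)$-connected, and concludes by obstruction theory that a section exists as soon as $\dim(X)\le (k+1)(r+1)-1$. As written, item (2) of your proposal would not compile into a proof without either supplying a correct compression argument for the blocks or switching to this obstruction-theoretic approach.
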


These properties of $\cat$ will be quite useful for our purpose. In particular, Properties (2) and (5) tell us immediately that
for each $n$, $\cat(\R P^n)=n=\cat(\C P^n)$. A harder fact is that $\cat(X)=1$ if and only if $X$ is a co-$H$-space, see~\cite{CLOT}.

\subsection{Distributional category}
Recently, a probabilistic variant of LS-category was introduced in~\cite{DJ} and~\cite{KW}. This is the \emph{distributional category}
(or \emph{analog category} in~\cite{KW}). We refer to ~\cite{KW} and~\cite{Ja} for motivations behind this new invariant.
Given a pointed metric space $(X,x_0)$, the \emph{distributional category} of $X$, denoted $\dcat(X)$, is defined to be the smallest integer
$n$ for which there exists a map $H:X\to \B_{n+1}(P_0(X))$ such that $H(x)(0)\in \B_{n+1}(P(x,x_0))$ for each $x\in X$.
Here, $P_0(X)=\{\phi\in X^{[0,1]}\mid \phi(1)=x_0\}$ and $P(x,x_0)=\{\phi\in P_0(X)\mid \phi(0)=x\}$ have the (metrizable) subspace
topology coming from the compact-open topology of $X^{[0,1]}$, and $\B_{n+1}(Z)$ is the space of probability measures on
$Z$ supported by at most $n+1$ points, equipped with the Lévy--Prokhorov metric,~\cite{Pr}. We note that like $\cat(X)$ (see~\cite[Lemma 1.25]{CLOT}), $\dcat(X)$ is also independent of the choice of the basepoint $x_0\in X$, see also~\cite[Definition 3.1]{DJ}.

For a proof of the following, we refer to~\cite{DJ}.
\begin{theorem}[\protect{Dranishnikov--Jauhari}]\label{djth} Let $X$ be a CW complex.
    \vspace{-1.5mm}
\begin{enumerate}
\itemsep -0.4em
    \item $\dcat(X)\le\cat(X)$.
    \item If $Y\simeq X$, then $\dcat(Y)=\dcat(X)$.
    \item If $X'$ is a covering space of $X$, then $\dcat(X')\le \dcat(X)$.
    \item If $X$ is a finite CW complex, then $\cu(X)\le\dcat(X)$.
\end{enumerate}
\end{theorem}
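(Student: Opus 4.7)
I would promote the open-cover witness for $\cat(X)\le n$ to a measure-valued map. Fix a categorical open cover $\{U_0,\dots,U_n\}$ with null-homotopies $h_i:U_i\times I\to X$ satisfying $h_i(x,0)=x$ and $h_i(x,1)=x_0$, giving continuous maps $\phi_i:U_i\to P_0(X)$ with $\phi_i(x)\in P(x,x_0)$. Choose a partition of unity $\{\rho_i\}$ subordinate to $\{U_i\}$ and set $H(x)=\sum_i \rho_i(x)\,\delta_{\phi_i(x)}$. Continuity of $x\mapsto\phi_i(x)$ on $U_i$, combined with $\rho_i$ vanishing outside $U_i$, yields continuity of $H$ in the Lévy--Prokhorov metric, and the support bound is immediate.

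\textbf{Parts (2) and (3).} For (2), a homotopy equivalence $f:Y\to X$ with inverse $g:X\to Y$ lets us transport a witness for $\dcat(X)\le n$ to one for $\dcat(Y)\le n$: send $y$ to the measure on $P_0(Y)$ obtained by applying $g_*$ path-wise to $H(f(y))$ and then splicing each resulting path with a fixed trace of the homotopy $g\circ f\simeq \mathrm{id}_Y$ (and a chosen path from $g(x_0)$ to $y_0$) so the endpoints land at $y_0$; the reverse inequality is symmetric. For (3), I would use unique path lifting: fix $x_0'\in p^{-1}(x_0)$ and lift each path-atom of $H(p(x'))$ uniquely to a path in $X'$ starting at $x'$. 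Its endpoint lies in the discrete fibre $p^{-1}(x_0)$, and since this endpoint varies locally constantly with the lifted path, I can continuously post-concatenate with a pre-chosen path in $X'$ from that endpoint to $x_0'$, producing the required witness in $\mathcal{B}_{n+1}(P(x',x_0'))$.

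\textbf{Part (4).} This is where I expect the main obstacle. The plan is to use $H$ to null-homotope the Dirac embedding $\iota:X\hookrightarrow\mathcal{B}_{n+1}(X)$, $x\mapsto\delta_x$, and then extract a cohomological constraint. Pushing $H(x)$ forward under the evaluation maps $e_t:P_0(X)\to X$, $\phi\mapsto\phi(t)$, yields a continuous homotopy of maps $X\to\mathcal{B}_{n+1}(X)$ from $\iota$ at $t=0$ to the constant map $x\mapsto\delta_{x_0}$ at $t=1$, so $\iota$ is null-homotopic. To convert this into a cup-length bound, I would invoke the homotopy-theoretic model of $\mathcal{B}_{n+1}(X)$ as an $(n+1)$-fold join-type construction (as developed in~\cite{DJ}) in whose rational cohomology the $(n+1)$-fold cup product of any positive-degree classes on $X$ is realized via $\iota^*$; a null-homotopy of $\iota$ then forces all such products to vanish, giving $\cu(X)\le n$. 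The delicate step is making this passage from topology to cohomology rigorous, and this is precisely where the finite-CW hypothesis is needed: it ensures the Künneth-type computations on the join model behave well rationally.
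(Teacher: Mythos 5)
First, a point of comparison: the paper does not prove this theorem at all --- it is quoted with a pointer to~\cite{DJ} (see also~\cite{KW}), so your attempt can only be measured against that cited source. For parts (1)--(3) your arguments are correct and are essentially the standard ones: a partition of unity subordinate to a categorical cover, with the Lévy--Prokhorov metric absorbing the atoms of vanishing mass, gives (1); transporting the measure-valued witness through a homotopy equivalence by pushing paths forward and splicing with traces of the homotopies gives (2); and unique path lifting, together with the observation that the terminal point of the lift varies locally constantly in the discrete fibre so that a chosen path to the basepoint $x_0'$ can be appended continuously, gives (3). These need routine continuity verifications (e.g.\ continuity of the pushforward when atoms merge or have small mass), but nothing problematic.

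Part (4) is where the genuine gap lies, and you have in effect conceded it. The reduction you make is fine: pushing $H(x)$ forward along the evaluations $e_t$ does show that $\dcat(X)\le n$ forces the Dirac map $\iota\colon X\to\B_{n+1}(X)$, $x\mapsto\delta_x$, to be null-homotopic, and this is indeed the standard reformulation. But the cohomological step --- that every $(n+1)$-fold product of positive-degree rational classes on $X$ is realized in the image of $\iota^*$ from $H^*(\B_{n+1}(X);\Q)$ --- is exactly the non-trivial content of the theorem and is invoked rather than proved. It cannot be a formal consequence of ``a join-type model'': in the unsymmetrized $(n+1)$-fold join the reduced classes live only in degrees shifted up by $n$, and the statement is simply false over other coefficients (for instance $\dcat(\R P^n)=1$ while the mod~$2$ cup-length is $n$), so any correct argument must use the $\Sigma_{n+1}$-symmetrization together with a rational transfer/averaging argument in an essential way; this, together with compactness/metrizability of a finite complex (needed to compare $\B_{n+1}(X)$ with the symmetrized join and to control its Čech-type cohomology), is where the finiteness hypothesis enters --- not in ``Künneth-type computations behaving well.'' As written, part (4) defers the entire difficulty to the literature, so the proposal is not a self-contained proof of the cup-length bound, though its outline points at the correct mechanism.
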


\begin{ex}
For each $n$, $\dcat(T^n)=\cat(T^n)=n$. 
If $X$ is a closed surface, then $\dcat(X)=1$ if $X\simeq S^2$ or $X\simeq \R P^2$, and $\dcat(X)=\cat(X)=2$ otherwise.
\end{ex}

In general, the distributional category and the LS-category of a space may differ. Indeed, $\dcat(\R P^n)=1<n=\cat(\R P^n)$ for
each $n\ge 2$, and for each finite group $\Gamma$, we have that $\dcat(\Gamma)<|\Gamma|<\infty=\cat(\Gamma)$, see~\cite{KW} (and also~\cite{DJ}).
The following is the main result of~\cite{KW} (see also~\cite{Dr2}).

\begin{theorem}[\protect{Knudsen--Weinberger}]\label{kwth}
If $\Gamma$ is a torsion-free discrete group, then $\dcat(\Gamma)=\cat(\Gamma)=\cd(\Gamma)$.
\end{theorem}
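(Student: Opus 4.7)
The proof splits into two parts. The easier direction is $\dcat(\Gamma) \leq \cat(\Gamma) = \cd(\Gamma)$: the first inequality is \theoref{djth}(1), while the equality $\cat(\Gamma) = \cd(\Gamma)$ for torsion-free $\Gamma$ is classical, combining the Eilenberg--Ganea theorem (which produces a $K(\Gamma,1)$ of dimension $\cd(\Gamma)$ when $\cd(\Gamma) \geq 3$, so that \theoref{clotth}(2) gives $\cat(\Gamma) \leq \cd(\Gamma)$), the Stallings--Swan theorem for $\cd(\Gamma) = 1$, and the Berstein--Schwarz theorem, which supplies the lower bound $\cat(\Gamma) \geq \cd(\Gamma)$ through the nonvanishing of the $\cd(\Gamma)$-fold cup product of the universal Berstein--Schwarz class in local-coefficient cohomology.

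The substantive step is the lower bound $\dcat(\Gamma) \geq \cd(\Gamma)$. My plan is to upgrade the rational cup-length bound \theoref{djth}(4) to twisted coefficients: if $u_1,\ldots,u_k$ are classes of positive degree in $H^*(X;M_i)$ for $\Z\pi_1(X)$-module local systems $M_i$, and $u_1 \cup \cdots \cup u_k \neq 0$ in $H^*(X; M_1 \otimes \cdots \otimes M_k)$, then $\dcat(X) \geq k$. Granting this, I would apply it to the $\cd(\Gamma)$-fold power of the Berstein--Schwarz class of $\Gamma$, which is nonzero precisely because $\cd(\Gamma) = k$, and conclude $\dcat(\Gamma) \geq k = \cd(\Gamma)$.

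To establish the twisted cup-length bound for $\dcat$, I would mimic the Schwarz-style proof used for \theoref{djth}(4): a distributional categorical map $H \colon X \to \mathcal{B}_{n+1}(P_0(X))$ presents $X$ as receiving a section of a Milnor-type fiberwise join of the path fibration $P_0(X) \to X$, and one shows that $n+1$ positive-degree classes pulled back from the base must cup to zero in the total space. The essential observation is that the evaluation and barycentric operations defining $\mathcal{B}_{n+1}$ are natural with respect to the deck-transformation action on the universal cover, so the obstruction from cup products of twisted classes is preserved. This recovers the standard sectional-category inequality in a form valid for $\dcat$ and for arbitrary local systems.

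The main obstacle is precisely this naturality: averaging probability measures on paths interacts subtly with the monodromy of a local system, and without control the barycentric combinations could collapse twisted cup products that are nontrivial for $\cat$. Torsion-freeness of $\Gamma$ is exactly the hypothesis that rules this out, since the universal cover of $K(\Gamma,1)$ is contractible with free $\Gamma$-action, so there are no fixed-point phenomena allowing barycenters to produce new sections of the path fibration unavailable to ordinary $\cat$. This is also the feature that fails dramatically for finite groups, where $\dcat(\Gamma) < \infty = \cat(\Gamma)$, so any successful argument must use torsion-freeness at precisely this point.
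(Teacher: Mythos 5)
Your first paragraph (the direction $\dcat(\Gamma)\le\cat(\Gamma)=\cd(\Gamma)$) is standard and fine. Note, however, that the paper does not prove Theorem~\ref{kwth} at all: it is quoted from Knudsen--Weinberger~\cite{KW} (see also~\cite{Dr2}), so the comparison here is between your argument and the proofs in those sources, which proceed quite differently.

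The substantive step of your proposal rests on a key lemma that is false. You propose a local-coefficient cup-length lower bound for $\dcat$: if classes $u_1,\dots,u_k$ of positive degree in twisted cohomology have nonzero product, then $\dcat(X)\ge k$. Take $X=\rpn$ with $n\ge 2$ and let $w\in H^1(\rpn;I)$ be the Berstein--Schwarz class ($I$ the augmentation ideal of $\Z[\Z/2]$); then $w^n\ne 0$, which is exactly the twisted cup-length computation showing $\cat(\rpn)=n$, yet $\dcat(\rpn)=1$. So your lemma would give $\dcat(\rpn)\ge n$, a contradiction. Even the untwisted mod~$2$ cup-length of $\rpn$ (which is $n$) fails to bound $\dcat$, which shows the obstruction is not the ``naturality of barycentric operations with respect to deck transformations'' but the coefficients themselves: passing from a space to its finite-support probability-measure spaces (essentially symmetric-product constructions) preserves rational cohomology but destroys torsion and twisted classes, which is precisely why the lower bound in Theorem~\ref{djth}(4) is only rational. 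Your closing paragraph asserts that torsion-freeness of $\Gamma$ ``rules this out,'' but the lemma as you state it carries no hypothesis on $\pi_1(X)$ and is refuted by $\rpn$; any correct version must use torsion-freeness inside the proof in a structural way, and the proposal supplies no mechanism for this --- it is exactly the missing content of the theorem.

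For contrast, the arguments in~\cite{KW} and~\cite{Dr2} are equivariant rather than cup-theoretic: roughly, one works with the $\Gamma$-action on spaces of finitely supported probability measures over the free $\Gamma$-space $E\Gamma$, observes that the stabilizer of such a measure permutes its finitely many atoms and is therefore finite, and concludes that for torsion-free $\Gamma$ these measure spaces (and the associated Ganea-type constructions) are again free $\Gamma$-spaces behaving equivariantly like their classical counterparts; classical obstruction-theoretic/cohomological-dimension arguments then yield $\dcat(\Gamma)\ge\cd(\Gamma)$. That finite-isotropy observation is where torsion-freeness genuinely enters, and it cannot be recovered from a cup-length statement of the kind you propose.
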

In particular, $\dcat(M)=\cat(M)=\dim(M)$ for closed aspherical manifolds (hence, nilmanifolds). This result has an analog for non-aspherical spaces. A closed manifold $M$ with $\cat(M)=\dim(M)$ is said to be
\emph{essential}~\cite{Gr} (see also ~\cite{KR}). In~\cite{Ja}, it was shown that $\dcat(M)=\cat(M)=\dim(M)$ for an essential manifold $M$ if $\pi_1(M)$ is
torsion-free and $M$ obeys an extra homological condition called the \emph{cap property}. This result may be extended to
(generalized) connected sums under certain ``cap property'' conditions (see~\cite{Ja}).

\subsection{$c$-symplectic manifolds}\label{prelimcsympl}
A $2n$-dimensional manifold $M$ is called \textit{$c$-symplectic} if there exists a cohomology class $\omega\in H^2(M;\Q)$ such that
$\omega^n\ne 0$. The appellation $c$-symplectic is used because these spaces ``mimic" symplectic manifolds cohomologically,~\cite{LO}.
While each symplectic manifold is $c$-symplectic, the converse is not true. Indeed, $\C P^2\#\C P^2$ is $c$-symplectic but not symplectic.
Some other examples of $c$-symplectic manifolds include the Calabi--Yau manifolds~\cite{Yau} (which, of course, are K\"ahler)
and symmetric products of closed orientable surfaces~\cite{Mac} (see also~\cite{DDJ}).
If $M$ is a closed $2n$-symplectic manifold and $M$ is simply connected, then $\cu(M)=\dcat(M)=\cat(M)=n$ holds due to Theorems~\ref{clotth}
and~\ref{djth}. If $M$ is not simply connected, then both $\cat$ and $\dcat$ can attain any integer value between $n$ and $2n$. 
Indeed, for any $0\le k \le n$, one can start with a closed simply connected symplectic $(2n-2k$)-manifold $N$ and create $M=N\times T^{2k}$ with
$\dcat(M)=\cat(M)=n+k$. For some special $c$-symplectic manifolds $M$, namely the \emph{symplectically aspherical manifolds},
it was shown in~\cite{RO} that $\cat(M)=\dim(M)$. For a symplectically aspherical manifold $M$ satisfying the ``cap property",
$\dcat(M)=\cat(M)=\dim(M)$ was shown in~\cite{Ja}.

\subsection{Gottlieb groups and Hurewicz rank}\label{gott}
The \textit{Gottlieb group} of a space $X$, denoted $G(X)$, is defined to be the subgroup of $\pi_1(X)$ consisting of the
elements $\alpha\in \pi_1(X)$ that have associated maps $A:S^1\times X\to X$ with $A|_{S^1} \simeq \alpha$ and $A|_X \simeq 1_X$.
This is equivalent to $G(X)$ being the image of the evaluation map $\text{ev}_*\colon \pi_1(X^X,1_X) \to \pi_1(X)$, where $\text{ev}(f)=f(x_0)$ for some $x_0\in X$. Since $\text{ev}_*$ is the connecting map for the universal fibration $X \to \text{Baut}_*(X) \to \text{Baut}(X)$ (see~\cite{Got1}), we see that 
$G(X) \subset \cZ\pi_1(X)$, where $\cZ\pi_1(X)$ is the center of $\pi_1(X)$. In the above setting for $G(X)$, if we replace $S^1$ by $S^n$ and $\pi_1$ 
by $\pi_n$, then we get the $n$-th Gottlieb group of $X$, denoted $G_n(X)$. The following are some basic properties of $G(X)$, see~\cite{Got1}.
\vspace{-1.5mm}
\begin{enumerate}
\itemsep -0.4em
\item $G(X)$ is equal to the center of $\pi_1(X)$ if $X$ is an aspherical space, and it is equal to the full fundamental group if $X$ is an $H$-space.
\item Let $p:X'\to X$ be a covering of $X$ and $p_*:\pi_1(X')\to \pi_1(X)$ be the induced homomorphism. If $\alpha\in \pi_1(X')$ and
$p_*(\alpha)\in G(X)$, then $\alpha\in G(X')$.
\item Let $K$ be a compact Lie group acting on $(X,x_0)$ and let $\mathcal{O}:K\to X$ be the orbit map defined as $g\mapsto g\cdot x_0$.
Then $\text{Im}(\mathcal{O}_*)\subset G(X)$.
\item For any $X$ and $Y$, $G(X\times Y)=G(X)\oplus G(Y)$.
\item If $X \to E \to B$ is a fibration, then there is a map to the universal fibration (as above) which induces the commutative diagram
\[
\xymatrix{
\pi_2(B) \ar[rr] \ar[dr]_-{\partial_*} 
& 
&
\pi_2(\mathrm{Baut}(X)) \cong \pi_1(X^X,1_X) \ar[dl]^-{\rm{ev}_*} 
\\
&
\pi_1(X)
&
&}
\]
that shows that $\Im(\partial_*) \subset \Im(\mathrm{ev}_*) \subset G(X)$. 
\end{enumerate}

Here is an example of the influence of the Gottlieb group on the structure of a space. Suppose $X$ is a topological space such that $H_1(X;\Z)$ is finitely generated. Let $h:\pi_1(X)\to H_1(X;\Z)$ be the Hurewicz homomorphism.
Then the \textit{Hurewicz rank} of $X$ is defined to be the number of $\Z$-summands of $H_1(X;\Z)$ which are contained in $h(G(X))$.
We refer to~\cite{Got2},~\cite{Op2} for a proof of the following.
\begin{theorem}[\protect{Gottlieb}]\label{hurewicz}
Let $X$ be a space such that $H_*(X;\Z)$ is finitely generated. If the Hurewicz rank of $X$ is $s$, then $X\simeq T^s\times Y$ for some space $Y$.
\end{theorem}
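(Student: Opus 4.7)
The plan is to use the $s$ independent Gottlieb elements to build a homotopy $T^s$-action on $X$, and then split off the $T^s$ factor using a cohomological section coming from the Hurewicz rank hypothesis.

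First, by hypothesis I can choose $\alpha_1,\ldots,\alpha_s\in G(X)$ whose Hurewicz images $h(\alpha_1),\ldots,h(\alpha_s)$ span a rank-$s$ free direct summand of $H_1(X;\Z)$. For each $\alpha_i$, the definition of $G(X)$ provides an affiliated map $A_i\colon S^1\times X\to X$ with $A_i|_{\{*\}\times X}=1_X$ and $A_i|_{S^1\times\{x_0\}}\simeq\alpha_i$. Since $G(X)\subset\cZ\pi_1(X)$, the elements $\alpha_i$ commute pairwise, and a standard observation of Gottlieb shows that the corresponding affiliated maps can be homotoped so as to commute as maps $S^1\times S^1\times X\to X$. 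Iterating this, I assemble the $A_i$ into a single map $A\colon T^s\times X\to X$ satisfying $A|_{\{*\}\times X}=1_X$ and $A|_{T^s\times\{x_0\}}=\psi$, where $\psi_*\colon\pi_1(T^s)=\Z^s\to\pi_1(X)$ sends the $i$-th standard generator to $\alpha_i$.

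Second, because the $h(\alpha_i)$ span a direct summand of $H_1(X;\Z)$, there is a retraction $H_1(X;\Z)\to\Z^s$ sending $h(\alpha_i)$ to the $i$-th basis vector. Under the universal coefficient identification $H^1(X;\Z)=\Hom(H_1(X;\Z),\Z)$, this retraction is represented by $s$ classes in $H^1(X;\Z)\cong[X,S^1]$, hence by a map $\phi\colon X\to T^s$. By construction, $\phi\circ\psi$ induces the identity on $\pi_1(T^s)=\Z^s$; since $T^s$ is a $K(\Z^s,1)$, this forces $\phi\circ\psi\simeq 1_{T^s}$. I then replace $\phi$ by a Serre fibration of the same homotopy type and let $Y$ denote its fiber, so that $\psi$ is (up to homotopy) a section of $\phi$.

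Third, I define $\Psi\colon T^s\times Y\to X$ by $\Psi(t,y)=A(t,y)$. Its restrictions recover $\psi$ on $T^s\times\{x_0\}$ and the fiber inclusion $Y\hookrightarrow X$ on $\{*\}\times Y$. The long exact sequence of the fibration $Y\to X\xrightarrow{\phi}T^s$ splits at every level thanks to the section $\psi_*$, giving $\pi_n(X)\cong\pi_n(T^s)\oplus\pi_n(Y)$ for all $n$. A direct diagram chase then shows that $\Psi$ realizes this splitting, hence induces isomorphisms on all homotopy groups. The finite generation of $H_*(X;\Z)$ lets me replace $X$ by a CW complex up to homotopy, so Whitehead's theorem yields $X\simeq T^s\times Y$.

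The main obstacle is the first step, namely producing a single joint affiliated map $A\colon T^s\times X\to X$ from the individual $A_i$. Strict commutativity of the $A_i$ is not automatic from $\pi_1$-centrality alone; one needs to use Gottlieb's affiliation lemma to homotope them into compatible position inductively, controlling higher obstructions at each stage. Once $A$ is in hand, the section $\psi$ and the cohomological construction of $\phi$ feed into the splitting argument in a relatively formal way.
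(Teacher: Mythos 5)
Your first step---assembling the individual affiliated maps $A_i$ into a single map $A\colon T^s\times X\to X$ with $A|_{\{*\}\times X}=1_X$ realizing all of $\alpha_1,\dots,\alpha_s$---is exactly where the content of the theorem lies, and you have not supplied it. Centrality of $G(X)$ in $\pi_1(X)$ only says the $\alpha_i$ commute in $\pi_1(X)$; what you need is that the adjoint loops $S^1\to X^X$ (based at $1_X$) commute in $\pi_1(X^X,1_X)$, which is not implied, and even granting that you can extend over the $2$-skeleton of $T^s$, the $3$-cells and higher cells of $T^s$ produce obstructions in $\pi_k(X^X,1_X)$, $k\ge 2$, whose vanishing you merely assert (``controlling higher obstructions at each stage''). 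There is no ``standard observation of Gottlieb'' that produces such a torus-parametrized affiliated map from pairwise centrality; in fact such an $A$ exists \emph{a posteriori} precisely because $X\simeq T^s\times Y$ (let $T^s$ translate the first factor), so positing it at the outset is essentially assuming the conclusion. A smaller point: finite generation of $H_*(X;\Z)$ does not let you ``replace $X$ by a CW complex up to homotopy''; CW homotopy type is a separate standing assumption, not a consequence of finiteness of homology.

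For comparison, the paper does not prove this statement but refers to~\cite{Got2} and~\cite{Op2}, where the argument is an induction that splits off \emph{one} circle at a time and therefore never needs a $T^s$-affiliated map. Choose $\alpha\in G(X)$ whose Hurewicz image generates a $\Z$-summand, take $\phi\colon X\to S^1$ dual to that summand (so $\phi\circ\alpha$ has degree one), convert $\phi$ into a fibration with fibre $F$, and check that $(t,y)\mapsto A(t,y)$ gives a homotopy equivalence $S^1\times F\to X$; this is essentially your steps two and three in the case $s=1$, where they are sound. Then $G(S^1\times F)=\Z\oplus G(F)$ by Property (4) of Section~\ref{gott}, the Hurewicz rank of $F$ is $s-1$, and $H_*(F;\Z)$ is again finitely generated by the K\"unneth theorem (this is where the finiteness hypothesis is actually used to keep the induction running), so one iterates to obtain $X\simeq T^s\times Y$. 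To repair your write-up you should either prove the vanishing of the obstructions to extending the $A_i$ to a map $T^s\times X\to X$, or restructure the argument as this induction.
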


\section{Related results}\label{related results}
In this section, we recall some results from the literature that we aim to improve and/or extend in this paper. The most basic result for us
is the classical result of S.~Bochner that a closed $n$-manifold $M$ with non-negative Ricci curvature has $b_1(M) \leq \dim(M)$
and, perhaps most importantly, if $b_1(M)=\dim(M)$, then $M$ must be (diffeomorphic to) a flat torus. The proof given by Bochner was
analytic in nature, and a modern proof involving the Dirac operator may be found in~\cite{LM}. In fact, a simpler proof follows from the
powerful splitting theorem of Cheeger and Gromoll~\cite{CG}, which is stated as follows.
\begin{theorem}[\protect{Cheeger--Gromoll}]\label{cgth}
If $M$ is a closed $n$-manifold with non-negative Ricci curvature, then there exists a finite covering space $M'$ of $M$ and a
diffeomorphism $M'\cong T^r\times W$, where $W$ is a closed simply connected manifold.
\end{theorem}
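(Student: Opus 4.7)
The plan is to reduce everything to the \emph{Cheeger--Gromoll splitting theorem}, which states that a complete Riemannian manifold with non-negative Ricci curvature containing a line (a bi-infinite geodesic minimizing distance between any two of its points) splits isometrically as a Riemannian product $\R\times N$, where $N$ is complete with non-negative Ricci curvature. The target is to recover a global product structure on a finite cover of $M$ from repeated application of this splitting on the universal cover.

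First I would lift the metric to the universal cover $\widetilde{M}$. Since $M$ is closed with non-negative Ricci curvature, $\widetilde{M}$ is complete and inherits non-negative Ricci curvature. If $\widetilde{M}$ is already compact (equivalently $\pi_1(M)$ is finite), then $\widetilde{M}$ cannot contain a line; taking $M'=\widetilde{M}$, $r=0$, and $W=\widetilde{M}$ already gives the conclusion. Otherwise, $\widetilde{M}$ is non-compact, and a standard argument using sequences of minimizing segments escaping to infinity produces a line. Apply the splitting theorem and iterate: at each step either the remaining factor contains a line and splits off another $\R$-factor, or it does not. Because each splitting strictly decreases dimension, this terminates, yielding an isometric decomposition $\widetilde{M}\cong \R^{r}\times \widetilde{W}$ where $\widetilde{W}$ is complete, non-negatively Ricci curved, and contains no lines.

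Next I would argue that $\widetilde{W}$ is compact and simply connected. Simple-connectedness follows from $\widetilde{M}$ being simply connected and the product splitting. Compactness is delicate: since $\widetilde{W}$ has no lines and bounded geometry inherited from $M$, one shows (via Toponogov-style or splitting-type arguments) that $\widetilde{W}$ has finite diameter, hence is compact. This is the technical heart of the argument in my view, and the place where non-negative Ricci curvature and the absence of lines must be combined carefully.

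Finally, I would analyze the deck transformation action of $\Gamma=\pi_1(M)$ on $\widetilde{M}=\R^{r}\times \widetilde{W}$. Because the $\R^{r}$-factor is the de Rham Euclidean factor, any isometry of $\widetilde{M}$ preserves this splitting; thus $\Gamma$ maps into $\mathrm{Isom}(\R^{r})\times \mathrm{Isom}(\widetilde{W})$. Projecting to $\mathrm{Isom}(\R^{r})$ yields an action that is properly discontinuous and cocompact modulo the (compact) $\widetilde{W}$-factor. By Bieberbach's theorem, this image contains a subgroup of finite index consisting of pure translations, necessarily $\Z^{r}$. Passing to the corresponding finite-index subgroup $\Gamma'\le \Gamma$, and using that $\widetilde{W}$ is simply connected and compact so that the $\widetilde{W}$-projection of $\Gamma'$ is trivial (after possibly shrinking further by a finite-index subgroup to kill the finite action on $\widetilde{W}$), the quotient $M'=\widetilde{M}/\Gamma'$ is diffeomorphic to $T^{r}\times W$ with $W=\widetilde{W}$. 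This $M'$ is the desired finite cover.
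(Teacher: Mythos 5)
The paper itself gives no proof of this statement---it is quoted verbatim from Cheeger--Gromoll [CG71]---so your outline has to be judged against the classical argument it is trying to reproduce, and two of its key steps are not correct as written. First, neither the existence of a line in $\widetilde{M}$ nor the compactness of $\widetilde{W}$ can be extracted from non-compactness or ``bounded geometry'' alone: a complete non-compact manifold of non-negative Ricci curvature need only contain rays (a paraboloid, or a complete metric of positive Ricci curvature on $\R^n$, has no lines, bounded geometry, and infinite diameter), and Toponogov-type comparisons are unavailable since only Ricci, not sectional, bounds are assumed. The indispensable ingredient in both places is the isometric, cocompact action of $\Gamma=\pi_1(M)$: a line is produced by taking minimizing segments of length tending to infinity and using deck transformations to bring their midpoints into a compact fundamental domain before passing to a limit; and compactness of $\widetilde{W}$ is proved by first showing that isometries of $\widetilde{M}$ preserve the splitting $\R^r\times\widetilde{W}$ (since $\widetilde{W}$ has no Euclidean de~Rham factor), so that $\Gamma$ projects to $\mathrm{Isom}(\widetilde{W})$ with coarsely dense orbits, and a ray in a non-compact $\widetilde{W}$ could then be promoted to a line, contradicting maximality of the split-off Euclidean factor. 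Note that this forces the splitting-preservation lemma to come \emph{before} the compactness claim, the opposite of the order in your sketch.

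The final step also fails as stated. The image of $\Gamma'$ in $\mathrm{Isom}(\widetilde{W})$ need not be finite---consider the quotient of $S^2\times\R$ by the $\Z$ generated by a unit translation coupled with an irrational rotation---so it cannot be ``killed'' by passing to a finite-index subgroup; and the kernel of $\Gamma\to\mathrm{Isom}(\R^r)$ can be a non-trivial finite group acting freely on $\widetilde{W}$ (e.g.\ $M=T^r\times L$ for a lens space $L$), so one must separately check that the finite-by-$\Z^r$ group $\Gamma'$ contains a finite-index subgroup isomorphic to $\Z^r$ meeting this kernel trivially. The correct conclusion is obtained differently, and this is precisely why the theorem asserts only a \emph{diffeomorphism}: with $\Gamma''\cong\Z^r$ acting by translations on $\R^r$ and by isometries on $\widetilde{W}$, the quotient is a flat $\widetilde{W}$-bundle over $T^r$; after one more finite-index passage the holonomy lands in the identity component of the closure of its image, a torus inside the compact Lie group $\mathrm{Isom}(\widetilde{W})$, and a homomorphism $\Z^r\to(\text{torus})$ extends over $\R^r$, which trivializes the bundle smoothly and yields $M'\cong T^r\times W$ (in general not isometrically). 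With these repairs---deck-group construction of lines, the splitting-preservation lemma feeding the compactness argument, and the flat-bundle trivialization replacing ``kill the action on $\widetilde{W}$''---your outline becomes the standard proof from [CG71].
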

We will refer to such a finite cover $M'$ as a \textit{Cheeger--Gromoll splitting} of $M$.

\begin{remark}
If $\pi_1(M)$ is finite, then $M'$ is the universal cover of $M$. So, Theorem~\ref{cgth} is not interesting in this case. Hence, we will
consider only those closed manifolds with non-negative Ricci curvature that have infinite fundamental groups.
\end{remark}

Using the Cheeger--Gromoll splitting, Bochner's theorem was improved by the second author in~\cite{Op1} as follows.

\begin{theorem}[\protect{Oprea}]\label{opth1}
Let $M$ be a closed $n$-manifold with non-negative Ricci curvature such that $\pi_1(M)$ is infinite. If $M'\cong T^r\times W$ is a
Cheeger--Gromoll splitting of $M$, then
    $b_1(M)\le \cat(M)-\cu(W)$.
Moreover, $b_1(M) = \cat(M)$ if and only if $M$ is a torus. Furthermore, if $M$ is also a closed $c$-symplectic $2n$-manifold, 
then  the Cheeger--Gromoll splitting takes the form $M'\cong T^{2k}\times W$, where $W$ is a closed simply connected $c$-symplectic 
manifold, and
    $b_1(M)\le 2\cat(M)-\dim(M)$.
\end{theorem}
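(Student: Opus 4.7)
The plan is to leverage the Cheeger--Gromoll splitting $M' = T^r \times W$ from Theorem~\ref{cgth} along with the comparison of $\cat$ and $\cu$ under finite covers. Since $W$ is simply connected, $b_1(M') = r$, and because $M'$ is a finite cover of $M$, the transfer map yields an injection $H^*(M;\Q) \hookrightarrow H^*(M';\Q)$, so $b_1(M) \le r$. On the other hand, since the top class of $T^r$ is the product of its $r$ degree-one generators, the K\"unneth formula implies $\cu(M') \ge r + \cu(W)$, and Theorem~\ref{clotth}(5) and (4) then give
\[
r + \cu(W) \;\le\; \cat(M') \;\le\; \cat(M).
\]
Combining these bounds delivers $b_1(M) \le r \le \cat(M) - \cu(W)$.

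For the equality case, $b_1(M) = \cat(M)$ forces $\cu(W) = 0$. Since $W$ is a closed simply connected orientable manifold, Poincar\'e duality then forces $W$ to be a point, whence $M' = T^n$. Thus the universal cover of $M$ is $\R^n$, and $M$ is aspherical and flat with $\pi_1(M)$ torsion-free and virtually $\Z^n$. The further condition $b_1(M) = n$ forces the holonomy representation of $F = \pi_1(M)/\Z^n$ on $\Z^n$ to be trivial (since $b_1(M)$ equals the rank of the $F$-invariants), after which the Bieberbach structure theorem for flat manifolds yields $M = T^n$.

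For the $c$-symplectic refinement, I would pull $\omega \in H^2(M;\Q)$ back to $M' = T^r \times W$. Since $H^1(W) = 0$, the K\"unneth decomposition of $H^2(M';\Q)$ gives $\omega' = \alpha + \beta$ with $\alpha \in H^2(T^r;\Q)$ and $\beta \in H^2(W;\Q)$. Expanding $(\omega')^n = \sum_j \binom{n}{j}\alpha^j\beta^{n-j}$, any surviving term must have both $\alpha^j \ne 0$ (forcing $2j \le r$) and $\beta^{n-j} \ne 0$ (forcing $2(n-j) \le \dim W = 2n-r$), hence $2j = r$. Thus $r = 2k$ is even, $\beta^{n-k} \ne 0$, and $W$ is closed simply connected $c$-symplectic. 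The cup-length then satisfies $\cu(M') \ge 2k + (n-k) = n+k$, so $n+k \le \cat(M') \le \cat(M)$, which rearranges to $2k \le 2\cat(M) - \dim(M)$. Combined with $b_1(M) \le b_1(M') = 2k$, this yields the full chain of inequalities. I expect the most delicate step to be the equality case, which relies on the Bieberbach structure theory to conclude that maximal first Betti number on a flat manifold forces a torus.
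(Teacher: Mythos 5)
Your main line of argument is correct and is essentially the approach this paper relies on: the transfer argument of Remark~\ref{transfer} gives $b_1(M)\le b_1(M')=r$, the K\"unneth/cup-length estimate $r+\cu(W)\le\cu(M')\le\cat(M')\le\cat(M)$ uses Theorem~\ref{clotth}(4),(5), and your analysis of $\omega'=\alpha+\beta$ with $(\omega')^n=\sum_j\binom{n}{j}\alpha^j\beta^{n-j}$ forcing $2j=r=2k$ is precisely a re-derivation of Lemma~\ref{imp1}; combining it with $\cu(W)\ge n-k$ gives $n+k\le\cat(M)$, i.e.\ $2k\le 2\cat(M)-\dim(M)$, exactly as in the proofs of Theorem~\ref{new1} and Proposition~\ref{newnew1}.

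The one place you diverge, and where there is a small but genuine gap, is the equality case. From $\cu(W)=0$ you correctly get $W=\ast$ and $M'= T^n$, so $M$ is aspherical with $b_1(M)=\cat(M)=n$. But you then assert that $M$ is \emph{flat} before running the Bieberbach/holonomy argument. Flatness does not follow from the statement of Theorem~\ref{cgth} as used here, which only provides a finite cover \emph{diffeomorphic} to $T^r\times W$; a contractible universal cover together with non-negative Ricci curvature is not, on its face, a flatness statement. To justify it you would need the actual isometric splitting theorem of Cheeger--Gromoll applied to the universal cover ($\wt M$ splits isometrically as $\R^j\times \overline W$ with $\overline W$ compact; contractibility forces $\overline W=\ast$, so $\wt M$ is isometric to flat $\R^n$), after which your holonomy argument (trivial holonomy since $b_1$ equals the rank of the invariants, faithfulness of the holonomy representation, hence $\pi_1(M)=\Z^n$) does finish the job. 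The paper avoids this entirely by observing that $b_1(M)=\dim(M)$ with non-negative Ricci curvature is exactly the hypothesis of Bochner's classical rigidity theorem, which immediately gives $M=T^n$; quoting Bochner is the cleaner repair. Finally, note that the statement is an ``if and only if'': the (trivial) converse direction, that $M=T^n$ gives $b_1(M)=n=\cat(M)$, should at least be mentioned.
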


\begin{remark}\label{transfer}
The proof of Theorem~\ref{opth1} uses the fact that the homomorphism $p^*:H^*(M;\Q)\to H^*(M';\Q)$ induced by the finite covering map $p:M'\to M$ is a split monomorphism due to the corresponding transfer map in cohomology, see~\cite[Sections III.9 \& III.10]{Br}. As a result, $b_1(M)\le b_1(M')$.
\end{remark}

In the case when $M$ is a closed $c$-symplectic manifold, Theorem~\ref{opth1} uses the following lemma from~\cite{Op1}, which we shall use below.

\begin{lemma}\label{imp1}
If $p:M'\to M$ is a finite covering map and $M$ is a $c$-symplectic $2n$-manifold with a cohomology class $\omega \in H^2(M;\Q)$,
then $M'$ is also $c$-symplectic with $\omega\in H^{2}(M';\Q)$. Moreover, if $M'\cong T^r\times W$ for some closed simply 
connected manifold $W$, then $r=2k$, and $w=w_1\oplus w_2$ for cohomology classes $w_1\in H^2(T^{2k};\Q)$ and $w_2\in H^2(W;\Q)$ satisfying $w_1^k\ne 0$ and $w_2^{n-k}\ne 0$.
\end{lemma}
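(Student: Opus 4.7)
The plan is to first establish that $p^*\omega$ plays the role of the $c$-symplectic class on $M'$, and then use the K\"unneth formula together with a degree-counting argument to pin down the form of this class.

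For the $c$-symplectic claim, I would invoke the transfer principle recalled in Remark~\ref{transfer}: since $p\colon M' \to M$ is a finite covering, the induced map $p^*\colon H^*(M;\Q) \to H^*(M';\Q)$ is a split monomorphism. In particular, $(p^*\omega)^n = p^*(\omega^n) \ne 0$, so $p^*\omega$ (which I will identify with $\omega$ in line with the statement) is a $c$-symplectic class on $M'$.

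Next, assume $M' = T^r \times W$ with $W$ closed, simply connected of dimension $2n - r$. Since $H^1(W;\Q) = 0$, the K\"unneth formula collapses in degree $2$ to give
\[
H^2(M';\Q) \;\cong\; H^2(T^r;\Q) \oplus H^2(W;\Q).
\]
Write $\omega = w_1 + w_2$ accordingly, with $w_1 \in H^2(T^r;\Q)$ and $w_2 \in H^2(W;\Q)$. Expanding by the binomial theorem,
\[
0 \ne \omega^n \;=\; \sum_{i=0}^n \binom{n}{i}\, w_1^{\,i}\, w_2^{\,n-i},
\]
and under the K\"unneth decomposition the term $w_1^{\,i} w_2^{\,n-i}$ lies in the summand $H^{2i}(T^r;\Q) \otimes H^{2(n-i)}(W;\Q)$.

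Now I run the degree count that is the heart of the argument. For $w_1^{\,i} w_2^{\,n-i}$ to survive, we need $2i \le r$ (so that $H^{2i}(T^r;\Q) \ne 0$) and $2(n-i) \le 2n - r$ (so that $H^{2(n-i)}(W;\Q)$ can be nonzero, using $\dim W = 2n-r$). These inequalities together force $2i = r$, so the only way for $\omega^n$ to be nonzero is that $r = 2k$ is even, giving conclusion~(1), and that the single surviving term $\binom{n}{k}\, w_1^{\,k}\, w_2^{\,n-k}$ is nonzero. The latter forces both $w_1^{\,k} \ne 0$ in $H^{2k}(T^{2k};\Q)$ and $w_2^{\,n-k} \ne 0$ in $H^{2(n-k)}(W;\Q)$, which is conclusion~(2). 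The only subtle point — and really the main obstacle — is ensuring that the K\"unneth identification is compatible with cup products so that the parity argument genuinely applies; this is standard once $H^1(W;\Q) = 0$ eliminates any mixing via $H^1(T^r)\otimes H^1(W)$.
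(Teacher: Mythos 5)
Your proof is correct and follows essentially the same route as the argument this lemma rests on: the paper does not reprove it but quotes it from~\cite{Op1}, where the proof is exactly this combination of the transfer-induced injectivity of $p^*$ on rational cohomology and the K\"unneth decomposition $\omega=w_1+w_2$ (no mixed term since $H^1(W;\Q)=0$), with the degree count $2i\le r$ and $2(n-i)\le 2n-r$ forcing $2i=r$, hence $r=2k$ and $w_1^k\ne 0$, $w_2^{n-k}\ne 0$. No gaps to report.
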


\begin{remark}\label{rem:csympexistence}
The simplest examples of closed $c$-symplectic (actually symplectic) manifolds having non-negative Ricci (actually sectional) curvature are the products $M^{2n}=T^{2k} \times N^{2n-2k}$ for closed simply connected symplectic $(2n-2k)$-manifolds $N$. Here, we have $b_1(M)=2k < n+k = \cat(M)$.
For such a product manifold $M$, the 
second estimate of Theorem~\ref{opth1} is sharp, but $M$ is \emph{not} a torus. Besides these examples, there are the Calabi--Yau manifolds with infinite fundamental groups~\cite{Yau} (although most physicists take their fundamental groups to be finite).
\end{remark}

The paper~\cite{FH}, which formulated a minimal model approach to rational LS-category, essentially re-invigorated the subject.
One of the main results of~\cite{FH} relates the ``size" of (all) the Gottlieb groups of a space with the LS-category of its rationalization 
(in the simply connected case) as follows: if $X$ is a simply connected topological space, then $\dim(G_*(X)\otimes_{\Z}\Q)\le\cat(X_{\Q})$,
where $X_{\Q}$ is the rationalization of $X$. For non-simply connected spaces, an integral analog of the above result for the
first Gottlieb group was obtained in~\cite{Op2} using Theorem~\ref{hurewicz}.

\begin{theorem}[\protect{Oprea}]\label{opth3}
If $A$ is a finitely generated free abelian subgroup of $G(X)$, then $\rank(A)\le \cat(X)$. 
In particular, if $G(X)$ is finitely generated free abelian, then $\rank(G(X))\le\cat(X)$.
Moreover, if $X$ is a closed $n$-manifold, then equality holds if and only if $X\simeq T^n$.
\end{theorem}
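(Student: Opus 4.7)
The plan is to derive the inequality $\rank(A) \le \cat(X)$ from Gottlieb's structural Theorem~\ref{hurewicz}, using the Gottlieb property of $A$ to produce a torus factor of $X$, and then to promote the inequality to equality for closed manifolds using subadditivity of $\cat$ on products.

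First I show that the Hurewicz rank of $X$ is at least $r := \rank(A)$. Choose a free basis $\alpha_1, \ldots, \alpha_r$ of $A$. Each $\alpha_i \in G(X)$ is witnessed by a map $A_i \colon S^1 \times X \to X$ with $A_i|_{S^1} \simeq \alpha_i$ and $A_i|_X \simeq 1_X$. Since $A \subseteq \cZ \pi_1(X)$ and the $\alpha_i$ commute pairwise, these maps assemble into a single map $\mu \colon T^r \times X \to X$ with $\mu|_X \simeq 1_X$ and with $\mu|_{T^r} \colon T^r \to X$ inducing on $\pi_1$ the inclusion $A \hookrightarrow \pi_1(X)$. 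Writing $\iota \colon X \hookrightarrow T^r \times X$ for the basepoint inclusion, the relation $\mu \circ \iota \simeq 1_X$ gives a splitting $\iota^* \circ \mu^* = \mathrm{id}$ on $H^*(X;\Q)$. Expanding $\mu^*$ via Künneth and tracking degree-$1$ classes then forces $h(\alpha_1), \ldots, h(\alpha_r)$ to span a free $\Z$-summand of rank $r$ in $H_1(X;\Z)$, so the Hurewicz rank of $X$ is at least $r$. With that in hand, Theorem~\ref{hurewicz} yields a homotopy equivalence $X \simeq T^r \times Y$ for some space $Y$. By Theorem~\ref{clotth}(1), $\cat(X) = \cat(T^r \times Y)$; the pullbacks via projection of the $r$ generators of $H^1(T^r;\Q)$ cup to a nonzero class in $H^r(T^r \times Y;\Q)$, giving $\cu(T^r \times Y) \ge r$, and Theorem~\ref{clotth}(5) then yields $\cat(X) \ge r$.

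For the closed manifold case, assume $X$ is a closed $n$-manifold and $\rank(G(X)) = \cat(X)$. Set $A = G(X)$ of rank $r$; the previous paragraph gives $X \simeq T^r \times Y$. By Theorem~\ref{clotth}(3), $\cat(X) \le \cat(T^r) + \cat(Y) = r + \cat(Y)$, and together with $\cat(X) = r$ this forces $\cat(Y) = 0$, so $Y$ is contractible. Hence $X \simeq T^r$. Since $X$ is a closed $n$-manifold, $H_n(X;\Z/2) \ne 0$ forces $n \le r$, while Theorem~\ref{clotth}(2) gives $r = \cat(X) \le n$; thus $r = n$ and $X \simeq T^n$. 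The converse is immediate, since $T^n$ is an $H$-space (so $G(T^n) = \pi_1(T^n) = \Z^n$) and $\cat(T^n) = n$.

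The main obstacle is the Hurewicz rank step: carefully assembling the individual Gottlieb maps $A_i$ into a single $\mu \colon T^r \times X \to X$ using commutativity within $G(X)$, and then extracting from the resulting cohomological splitting the conclusion that $h(A)$ is a rank-$r$ free summand of $H_1(X;\Z)$. Once this structural fact is in place (it is precisely the classical input behind Theorem~\ref{hurewicz}), the remainder of the proof is a formal combination of standard properties of $\cat$.
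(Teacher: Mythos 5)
Your argument breaks at its crucial first step: the claim that the Gottlieb property forces $h(\alpha_1),\dots,h(\alpha_r)$ to span a rank-$r$ free $\Z$-summand of $H_1(X;\Z)$, i.e.\ that the Hurewicz rank of $X$ itself is at least $\rank(A)$. This is false. Take $X=\mathcal H_3$, the $3$-dimensional Heisenberg nilmanifold (as in Remark~\ref{rem:noconverse}): it is aspherical, so $G(X)=\cZ\pi_1(X)\cong\Z$, and the central generator is a commutator, so its Hurewicz image is $0$; the Hurewicz rank of $X$ is $0$ even though $A=G(X)$ has rank $1$, and indeed $X$ does not split as $S^1\times Y$ (a central direct $\Z$-factor would inject into $H_1$). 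The cohomological step cannot rescue this: for any map $\mu\colon T^r\times X\to X$ with $\mu|_X\simeq 1_X$, the identity $\iota^*\circ\mu^*=\id$ holds automatically and only records the $H^0(T^r)\otimes H^1(X)$ component of $\mu^*$; the $H^1(T^r)\otimes H^0(X)$ component, which is what detects the classes $h(\alpha_i)$, can perfectly well vanish. So you cannot apply Theorem~\ref{hurewicz} to $X$, and both your inequality argument and your equality argument (which uses $X\simeq T^r\times Y$ and subadditivity of $\cat$) collapse. (Secondarily, assembling the maps $A_i$ into a single $\mu\colon T^r\times X\to X$ for $r\ge 2$ is not justified by commutativity in $\pi_1$ alone; there are extension obstructions to fill in the higher cells of $T^r$.)

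The correct route --- the one the paper runs for the $\dcat$ analogue in Theorem~\ref{new2}, mirroring Oprea's original argument --- is to pass to the covering $p\colon X'\to X$ corresponding to $A$, so $\pi_1(X')=A\cong\Z^r$. By the covering property of Gottlieb groups (Property (2) in Section~\ref{gott}), $G(X')=\pi_1(X')$, and since this group is free abelian the Hurewicz map identifies it with a rank-$r$ summand of $H_1(X';\Z)$; thus the Hurewicz rank of $X'$ (not of $X$) equals $r$, Theorem~\ref{hurewicz} gives $X'\simeq T^r\times Y$, and then $\cat(X)\ge\cat(X')\ge \cu(X')\ge r$ by Theorem~\ref{clotth}(4),(5). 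Your equality case would also have to be rebuilt on the cover: the splitting you obtain is of $X'$, the covering inequality $\cat(X')\le\cat(X)$ points the wrong way for your subadditivity step on $X$, and when $A=G(X)$ has infinite index in $\pi_1(X)$ the factor $Y$ need not be a closed manifold, so deducing $Y\simeq *$ and $X\simeq T^n$ needs a genuinely different argument (compare the finite-index hypothesis the paper adds in Theorem~\ref{new2}(3)). Only your converse direction ($X\simeq T^n$ implies equality) is fine as written.
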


\section{Extensions for distributional category}\label{genres}
In this section, we will see that $\dcat$ allows us to improve the results of Section~\ref{related results}.
As motivation, recall that a compact Riemannian manifold $M^n$ is \emph{flat} if the sectional curvature at each
point is zero. It is known that $M^n$ is flat if and only if $M \cong \R^n/G$, where $G$ is a \emph{Bieberbach group} (see below) acting freely on $\R^n$ and $\cong$ denotes ``is isometric to''. Hence, $M$ is 
aspherical with fundamental group $\pi_1(M)\cong G$. Bieberbach's Theorem characterizes a Bieberbach group 
$G$ as one having a maximal normal free abelian subgroup $N \cong \Z^k$ such that $G/N = Q$ is finite.
In~\cite{Va}, Vasquez removed the maximal condition by showing that any free abelian $N$ with $G/N$
finite has a normal maximal free abelian centralizer of the same rank. Furthermore, using the fact that a finite index subgroup contains a normal finite index subgroup (namely the normal core), it can be shown that $G$ is the fundamental group  of a compact flat Riemannian $n$-manifold if and only if
\hspace{-1.5mm}
\begin{enumerate}
\itemsep -0.4em
\item $G$ is torsion-free,
\item $G$ contains a subgroup $N$ which is free abelian of rank $n$, and
\item the index $[G:N]$ is finite.
\end{enumerate}
Hence, if $M$ is flat, then $\pi_1(M)=G$ obeys these conditions. The converse was shown in 
\cite{FaHs}; that is, a closed (connected) manifold $M^n$ ($n \not = 3,4$)\footnote{For the case $n=3$, see~\cite{Wolf}, and for the case $n=4$, see~\cite{Lambert}.} supports a flat
Riemannian structure if and only if $M^n$ is aspherical (with $\cd(\pi_1(M))=n$) and $\pi_1(M)$ contains an abelian 
subgroup of finite index. From Section~\ref{gott}, we see that a flat manifold $M$ has the Gottlieb group given by the center of the fundamental group, i.e., 
$G(M) = \cZ \pi_1(M)$. Moreover, by Theorems~\ref{kwth} and~\ref{opth3}, we have 
$$\rank(\cZ\pi_1(M)) = \rank(G(M)) \leq \cat(M) = \dcat(M).$$
In~\cite{HiSa}, it was shown that $\rank(\cZ\pi_1(M)) = b_1(\pi_1(M))=b_1(M)$, so we see that we also have
$b_1(M) \leq \dcat(M)$,
an analog to Theorem~\ref{opth1}. In this case, the condition 
$b_1(M)=\cat(M) = \dcat(M) = \dim(M) = n$
then says that $\pi_1(M)$ contains a \emph{central} free abelian subgroup $\Z^n$. This can only happen if $\pi_1(M)$
is itself free abelian of rank $n$ (essentially by the same argument as in the proof of Theorem~\ref{new2} (3) given below). Hence, $M \cong T^n$. 

Since flat manifolds have non-negative Ricci curvature, they provide non-trivial examples of such manifolds with non-trivial Gottlieb groups, where we can also determine $\dcat$. Also, in~\cite{DHS} 
many flat manifolds were described that are K\"ahler (hence symplectic), so we have such non-trivial examples of $c$-symplectic nature too. Finally, we note that many of these flat K\"ahler manifolds have non-trivial centers (determined by their non-zero first Betti numbers), so they have non-trivial Gottlieb groups as well. 

Now, let us go beyond flat manifolds to consider manifolds with non-negative Ricci curvature. Our first result is a direct analog of Theorem~\ref{opth1} above for $\dcat$.
\begin{theorem}\label{new1}
Let $M$ be a closed $n$-manifold with non-negative Ricci curvature such that $\pi_1(M)$ is infinite.
If $M'\cong T^r\times W$ is a Cheeger--Gromoll splitting of $M$, then
\[
b_1(M)\le \dcat(M)-\cu(W).
\]
Moreover, $b_1(M) = \dcat(M)$ if and only if $M$ is a torus.
\end{theorem}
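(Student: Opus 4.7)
The plan is to mirror Oprea's proof of Theorem~\ref{opth1}, replacing $\cat$ by $\dcat$ wherever it appears, since $\dcat$ enjoys the two properties of $\cat$ that the original argument really uses: the rational cup-length lower bound for finite CW complexes (Theorem~\ref{djth}(4)) and monotonicity under coverings (Theorem~\ref{djth}(3)).

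First, I would apply the Cheeger--Gromoll theorem to obtain a finite covering $p\colon M'\to M$ with $M' = T^r\times W$ and $W$ a closed simply connected manifold. By Remark~\ref{transfer}, $p^*\colon H^*(M;\Q)\to H^*(M';\Q)$ is a split monomorphism, so $b_1(M)\le b_1(M')=r$, the second equality because $W$ is simply connected. Since $W$ is simply connected, the K\"unneth isomorphism $H^*(M';\Q)\cong H^*(T^r;\Q)\otimes H^*(W;\Q)$ holds as graded rings, and the rational cup-length is additive on the product: $\cu(M')=r+\cu(W)$. Stringing these together with Theorem~\ref{djth}(3) and (4) gives
\[
b_1(M) + \cu(W) \le r + \cu(W) = \cu(M') \le \dcat(M') \le \dcat(M),
\]
which is the desired inequality.

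For the equality clause, the forward direction is immediate: if $M=T^n$, then $b_1(M)=n=\dcat(M)$ by Theorem~\ref{kwth}. For the converse, suppose $b_1(M)=\dcat(M)$. Then the displayed chain collapses, forcing $b_1(M)=r$ and $\cu(W)=0$. Since $W$ is a closed simply connected manifold, Poincar\'e duality produces a nonzero top class, so $\cu(W)\ge 1$ unless $\dim W = 0$; hence $W$ is a point and $M'=T^r$. In particular $\dim(M)=r$. I would then bootstrap back to the $\cat$-side via the sandwich $\dcat(M)\le\cat(M)\le\dim(M)$ (using Theorem~\ref{djth}(1) and Theorem~\ref{clotth}(2)): the outer terms are both $r$, so $\cat(M)=r=b_1(M)=\dim(M)$. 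Invoking the equality clause of Theorem~\ref{opth1} for $\cat$ then concludes that $M$ is a torus.

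The only conceptually delicate point is the equality case, and the main idea is to avoid re-proving a ``rigidity" statement for $\dcat$: once the equality forces $\cu(W)=0$ (so $W$ is a point) and $\dim(M)=r=b_1(M)$, the sandwich $\dcat\le\cat\le\dim$ promotes the $\dcat$-equality to a $\cat$-equality, after which Theorem~\ref{opth1} finishes the job. Everything else is a routine transliteration, made possible because the two properties of $\cat$ actually used in Theorem~\ref{opth1} (cup-length bound, covering monotonicity) carry over verbatim to $\dcat$.
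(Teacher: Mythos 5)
Your proof is correct, and the first half (the inequality) is identical to the paper's: Cheeger--Gromoll splitting, the transfer argument of Remark~\ref{transfer} giving $b_1(M)\le r$, additivity of rational cup-length over the K\"unneth isomorphism, and the chain $\cu(M')\le\dcat(M')\le\dcat(M)$ from Theorem~\ref{djth}. The only divergence is in how you close the equality case. Both arguments force $\cu(W)=0$ and hence $W=\ast$, $M'=T^n$; but the paper then observes that $M$ is aspherical (being finitely covered by a torus), invokes Knudsen--Weinberger (Theorem~\ref{kwth}) to get $b_1(M)=\dcat(M)=\cat(M)=n$, and finishes with Bochner's classical theorem. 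You instead bypass asphericity and Theorem~\ref{kwth} by sandwiching $\dcat(M)\le\cat(M)\le\dim(M)$ between two copies of $r=b_1(M)$, and then quote the equality clause of Theorem~\ref{opth1} as a black box. Your route is slightly more economical in its inputs (only the dimension upper bound for $\cat$ plus the previously stated $\cat$-result, which itself rests on Bochner), while the paper's route makes the underlying mechanism explicit and, as a byproduct, records that in the equality case $M$ is aspherical with $\dcat(M)=\cat(M)=\cd(\pi_1(M))=n$ --- information that is reused elsewhere in the paper. Both are complete proofs; your use of Theorem~\ref{opth1} is not circular since that is a prior result quoted from the literature.
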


\begin{proof}
In view of Remark~\ref{transfer}, we have that $b_1(M)\le b_1(M')$. Since $W$ is simply connected, $b_1(M') = b_1(T^r)=r=\cu(T^r)$.
Now, Theorem~\ref{djth} gives us the following series of inequalities:
\[
\dcat(M)\ge \dcat(M')\ge \cu(M')=r+\cu(W)\ge b_1(M)+\cu(W).
\]
If $M$ is a torus, then $b_1(M)=\cat(M)=\dcat(M)$ by Theorems~\ref{opth1} and~\ref{kwth}. Conversely,
if $b_1(M)=\dcat(M)$, then the above inequalities give $\cu(W)=0$. Since $W$ is closed and simply connected (hence orientable),
this implies $W\simeq \ast$. Hence, $M'\cong T^n$. This means $M$ is a closed aspherical $n$-manifold. Therefore,
$b_1(M)=\dcat(M)=\cat(M)=n$ by Theorem~\ref{kwth}. But this is the situation of Bochner's theorem, so
we obtain $M\cong T^n$.
\end{proof}

We now obtain an analog of Theorem~\ref{opth3} for $\dcat$. Notice, however, that we do not get the full analog of Theorem~\ref{opth3}
because we only have a lower bound for $\dcat$ given by the rational cup-length instead of the integral cup-length.

Recall that in geometric
group theory (see~\cite{BB}), a discrete group $\Gamma$ is said to be in $FH(R)$ (written $\Gamma \in FH(R)$) for a commutative ring $R$ if
$\Gamma$ acts freely, properly discontinuously, cellularly, and cocompactly on an $R$-acyclic space (i.e., a CW complex $Y$ with $H^j(Y;R)=0$
for $j > 0$). This is one step in the ladder of finiteness conditions for a group (also see~\cite{Br}).

\begin{theorem}\label{new2}
If $A\subset G(X)$ is a finitely generated free abelian group, then
\[
\rank(A)\le \dcat(X).
\]
In particular, if $G(X)$ is finitely generated free abelian, then $\rank(G(X))\le\dcat(X)$.
Moreover,
\vspace{-1.5mm}
\begin{enumerate}
\itemsep -0.4em
\item if $X$ is compact and $\rank(G(X)) = \dcat(X)$, then $\pi_1(X) \in FH(\Q)$ and, hence, $\Q$ has a finite
resolution by finitely generated \emph{free} $\Q\pi_1(X)$-modules;
\item if $X$ is a nilpotent space and $\rank(G(X)) = \dcat(X)$, then $X \simeq_\Q K(\pi_1(X),1)$;
\item if $X$ is a closed $n$-manifold and $G(X)$ has finite index in $\pi_1(X)$, then $\rank(G(X)) = \dcat(X)$ implies $X \simeq T^n$.
\end{enumerate}
\end{theorem}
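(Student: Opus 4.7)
For $\rank(A) \le \dcat(X)$, the plan is to reprove Oprea's cohomological input for Theorem~\ref{opth3} in a form that feeds into $\cu(X) \le \dcat(X)$ from Theorem~\ref{djth}(4). Given generators $\alpha_1,\dots,\alpha_k$ of $A$, their defining homotopies assemble into a toral action $\mu\colon T^k \times X \to X$ restricting to $1_X$ on $X$ and to a classifying map for $A \hookrightarrow \pi_1(X)$ on $T^k \times \{x_0\}$. A standard derivation calculation on $\mu^*\colon H^*(X;\Q) \to H^*(T^k;\Q) \otimes H^*(X;\Q)$ should produce classes $u_1,\dots,u_k \in H^1(X;\Q)$ with $u_1 \cdots u_k \ne 0$, whence $\cu(X) \ge k$ and therefore $\dcat(X) \ge k$. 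Taking $A = G(X)$ gives the stated corollary.

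In the equality regime $k = \rank(G(X)) = \dcat(X)$, the chain $k \le \cu(X) \le \dcat(X) = k$ collapses, so $\cu(X) = k$. For part~(1), since the $u_i$ pull back from $K(\pi_1(X),1)$, I would analyze the Serre spectral sequence of $\tilde X \to X \to K(\pi_1(X),1)$ and use cup-length saturation to conclude that the universal cover $\tilde X$ is $\Q$-acyclic. Cocompactness and freeness of the deck action follow from compactness of $X$, yielding $\pi_1(X) \in FH(\Q)$; the finite free $\Q\pi_1(X)$-resolution of $\Q$ is then the cellular chain complex of $\tilde X$ tensored with $\Q$. For part~(2), nilpotence lets me work via a Sullivan minimal model $(\Lambda V, d)$ of $X$. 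Identifying $G_1(X) \otimes \Q$ with a canonical piece of $V^1$ \`a la Felix--Halperin, the equality would force generators in $V^{\ge 2}$ to contribute trivially to cohomology, so $X_\Q \simeq K(\pi_1(X),1)_\Q$.

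For part~(3), I would pass to the finite regular cover $X' \to X$ with $\pi_1(X') = G(X) \cong \Z^r \oplus F$ ($F$ finite), and then to the further finite cover $X'' \to X'$ with $\pi_1(X'') = \Z^r$; property~(2) of $G(-)$ from Section~\ref{gott} gives $G(X'') = \pi_1(X'') = \Z^r$. Monotonicity under covers (Theorem~\ref{djth}(3)) and the basic inequality pin $\dcat(X'') = r$. Since the Hurewicz image of $G(X'')$ is all of $H_1(X'';\Z) = \Z^r$, Theorem~\ref{hurewicz} splits $X'' \simeq T^r \times Y$ with $Y$ closed and simply connected. K\"unneth then gives $\cu(X'') = r + \cu(Y)$, so $\cu(Y) = 0$; Poincar\'e duality on the orientable $Y$ forces $\dim Y = 0$, hence $X'' \simeq T^n$ with $r = n$. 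Since $X''$ is aspherical and finitely covers $X$, the manifold $X$ is aspherical; combining the finite-index condition on $G(X)$ with Schur's theorem and torsion-freeness of $\pi_1(X)$ forces $\pi_1(X) = \Z^n$, and asphericity then gives $X \simeq T^n$.

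The main obstacle is part~(1): converting the numerical equality $\cu(X) = \rank(G(X)) = \dcat(X)$ into the structural $\Q$-acyclicity of $\tilde X$. Part~(2) should follow by a rational-model variant of the same step, while part~(3) is a relatively clean splitting/covering argument once the basic inequality and K\"unneth for cup-length are in hand.
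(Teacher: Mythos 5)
Your mechanism for the fundamental inequality $\rank(A)\le\dcat(X)$ does not work. Extracting classes $u_1,\dots,u_k\in H^1(X;\Q)$ with $u_1\cdots u_k\ne 0$ from a map $\mu\colon T^k\times X\to X$ would in particular force $b_1(X)\ge\rank(A)$, and more basically it presumes that Gottlieb elements are visible in $H_1(X;\Q)$; neither is true, since $G(X)\subset\cZ\pi_1(X)$ can consist of commutators. Concretely, let $\Gamma$ be the free $2$-step nilpotent group on $4$ generators and $X$ the associated nilmanifold, so $X=K(\Gamma,1)$ and $G(X)=\cZ\Gamma=[\Gamma,\Gamma]\cong\Z^6$, while $b_1(X)=4$: there do not even exist six linearly independent degree-one rational classes, let alone six with nonzero product (already for the Heisenberg $3$-manifold the central generator dies in $H_1$, so the $u_i$ cannot be ``dual'' to the $\alpha_i$). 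Note the paper never claims $\cu(X)\ge k$ for $X$ itself. The correct step --- which you do use in your part (3) --- is to pass to the cover $X'$ with $\pi_1(X')=A$, invoke the covering property of Gottlieb groups (Property (2) of Section~\ref{gott}) to get $G(X')=\pi_1(X')=\Z^k$, hence Hurewicz rank $k$, split $X'\simeq T^k\times Y$ by Theorem~\ref{hurewicz}, and conclude $\dcat(X)\ge\dcat(X')\ge\cu(X')=k+\cu(Y)\ge k$ by Theorem~\ref{djth}; rational cup-length is not monotone under covering projections, which is exactly why one computes on $X'$, not on $X$. (Your construction of $\mu$ from $T^k$ also needs more than commutativity of $\pi_1(X^X,1_X)$ --- there are higher obstructions in $\pi_{\ge 2}(X^X)$ --- but this is moot given the counterexample.)

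Since your parts (1) and (2) are built on analyzing the nonexistent classes $u_i$ (Serre spectral sequence ``cup-length saturation'', identifying $G_1(X)\otimes\Q$ inside $V^1$ of a minimal model), they inherit this gap and are in any case much heavier than necessary. With the covering argument in place, the equality $\rank(G(X))=\dcat(X)$ forces $\cu(Y)=0$, i.e.\ $Y$ is $\Q$-acyclic; as $Y$ is simply connected, $\wt X\simeq\R^k\times Y$ is $\Q$-acyclic, which gives (1) at once (compactness of $X$ supplies the free, properly discontinuous, cellular, cocompact action needed for $FH(\Q)$), and (2) follows because $\pi_j(X)\otimes\Q=\pi_j(\wt X)\otimes\Q=0$ for $j\ge 2$, so the rationalization $X_\Q$ is already a $K(\pi_1(X)_\Q,1)$ --- no minimal models are required. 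Your part (3) essentially reproduces the paper's argument (finite cover, K\"unneth plus Poincar\'e duality forcing $Y\simeq\ast$ and $k=n$, asphericity of $X$), and your appeal to Schur's theorem together with torsion-freeness of $\pi_1(X)$ is a perfectly good substitute for the paper's five-term exact sequence computation; but as written it quotes ``the basic inequality'' for the cover, so it too stands only after the first part is repaired as above.
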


\begin{proof}
Let $p:X'\to X$ be a regular covering map corresponding to $A\subset \pi_1(X)$, so that $\pi_1(X')\cong A$. In fact, by Property (2) in
Section~\ref{gott}, we have $\pi_1(X')=G(X')$ since $p_*(A)\subset G(X)$. Because $A$ is free abelian, we have $A\cong\Z^k$ for some $k$
so that the Hurewicz rank of $A$ is $k$. Thus, by Theorem~\ref{hurewicz}, we have $X'\simeq T^k\times Y$, where $Y$ is simply connected. Clearly,
Theorem~\ref{djth} gives us the following series of inequalities (where recall that $\cu$ denotes the rational cup-length):
\[
\dcat(X)\ge \dcat(X')\ge \cu(X')=k+\cu(Y)\ge k=\rank(A).
\]
Now, suppose $\rank(G(X)) = \dcat(X)$. Then we have $\cu(Y)=0$ so that $Y$ is rationally acyclic. But since $X' \simeq T^k \times Y$, 
we have for the universal cover $\wt{X}$ of $X$ that $\widetilde X \simeq \R^k \times Y$. Hence, $\wt{X}$ is also $\Q$-acyclic. 
Since the fundamental group of a space always acts properly discontinuously on the universal cover and $X$ is compact, we conclude 
that $\pi_1(X) \in FH(\Q)$.

Note that in the previous argument, we only needed $X$ to be compact to fit the definition of $FH(\Q)$. The fact that $Y$, and
hence $\wt{X}$, is $\Q$-acyclic does not depend on the compactness of $X$. Now, 
if $X$ is a nilpotent space (i.e., $\pi_1(X)$ is nilpotent and acts nilpotently on the homology of the universal cover), we can rationalize
it to get $X_\Q$ with the property that $\pi_1(X_\Q)$ is the Malcev $\Q$-completion $\pi_1(X)_\Q$ of the nilpotent group $\pi_1(X)$, and $\pi_j(X_\Q)\cong \pi_j(X) \otimes \Q \cong \pi_j(\widetilde X) \otimes \Q \cong \{1\}$ for each $j\ge 2$ because $\widetilde X$ is simply connected and 
$\Q$-acyclic. Therefore, $X_\Q\cong K(\pi_1(X_\Q),1) \cong K(\pi_1(X)_{\Q},1)$ is an Eilenberg--Mac~Lane space and $X$ itself is rationally homotopy 
equivalent to $K(\pi_1(X),1)$. This is often written as $X \simeq_\Q K(\pi_1(X),1)$. 

Now, suppose that $X$ is a closed $n$-manifold and $G(X)$ has finite index in $\pi_1(X)$. Then we see that $A\cong \Z^k$ 
(as above) also has a finite index in $\pi_1(X)$. Therefore, $X' \simeq T^k \times Y \to X$ is a finite covering and $X'$ is then 
a closed $n$-manifold as well. Thus, $Y$ is also a closed manifold of dimension $n-k$. Just as before, since $\rank(G(X)) = k = 
\dcat(X)$, we have $\cu(Y)=0$. Hence, $Y$ is $\Q$-acyclic and simply connected. This means that $X'$ is an orientable closed 
$n$-manifold. So, we must have $H_n(X';\Q)\cong \Q$. But since $Y$ is $\Q$-acyclic, this can only happen if $k=n$, $Y \simeq *$, 
and $X' \simeq T^n$. This then means that $X \simeq K(\pi_1(X),1)$, so that $\pi_1(X)$ is torsion-free. But then we
have a short exact sequence
\[
\Z^n \to \pi_1(X) \to F,
\]
where $\Z^n$ is central in $\pi_1(X)$ and $F$ is finite. Consider the associated $5$-term exact sequence (see
\cite[Corollary VII.6.4]{Br})
\[
H_2(\pi_1(X)) \to H_2(F) \to \Z^n/[\Z^n,\pi_1(X)] \to H_1(\pi_1(X)) \to H_1(F).
\]
Because $\Z^n$ is central, the middle term reduces to $\Z^n$ itself. The quotient $F$ is finite, so $H_2(F)$ is finite as well and 
therefore, the image of $H_2(F) \to \Z^n$ is zero and $\Z^n \to H_1(\pi_1(X))$ is an injection. This translates to an injection $\Z^n \to
\pi_1(X)/[\pi_1(X),\pi_1(X)]$ so that $\Z^n \cap [\pi_1(X),\pi_1(X)] = \{1\}$. But then we have an injection
$[\pi_1(X),\pi_1(X)] \to \pi_1(X)/\Z^n$, where $\pi_1(X)/\Z^n$ is finite by our hypothesis. Since as a subgroup of $\pi_1(X)$,
$[\pi_1(X),\pi_1(X)]$ is torsion-free, we must have $[\pi_1(X),\pi_1(X)]=\{1\}$. Therefore, $\pi_1(X)$ is a finitely generated
torsion-free abelian group. This implies that $\pi_1(X) \cong \Z^n$ and $X \simeq T^n$.
\end{proof}

\begin{remark}\label{rem:noconverse}
If $M$ is a closed nilmanifold, then $\dcat(M)=\dim(M)$ by Theorem~\ref{kwth}. Note that $\rank(G(M))\le\rank(\mathcal{Z}\pi_1(M))<\dim(M)$, except when $M$ is a torus, see~\cite[Appendix~B]{OS2}. 
Therefore, being a compact Eilenberg--Mac~Lane space
is not sufficient to obtain $\rank(G(X)) = \dcat(X)$. 
\end{remark}

\begin{corollary}\label{cor:Grealize}
Suppose $M = L \times Y$  for a closed manifold $Y$ such that $\pi_1(Y)$ is finite with a trivial center
and a closed manifold $L$ with an H-space structure (for example, a closed Lie Group).  Then
$\dcat(M) > \rank(\pi_1(L))$.
\end{corollary}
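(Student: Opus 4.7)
The plan is to compute $G(M)$, deduce $\rank(\pi_1(L))\le\dcat(M)$ from Theorem~\ref{new2}, and then rule out equality via Theorem~\ref{new2}(3).

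First, I would compute $G(M)$ using the Gottlieb group properties of Section~\ref{gott}. Since $L$ has an $H$-space structure, Property (1) gives $G(L)=\pi_1(L)$, and since $G(Y)\subset \cZ\pi_1(Y)=\{1\}$, we have $G(Y)=\{1\}$. By Property (4), $G(M)=G(L)\oplus G(Y)\cong\pi_1(L)$, embedded in $\pi_1(M)=\pi_1(L)\times\pi_1(Y)$ as $\pi_1(L)\times\{1\}$. In particular, the index $[\pi_1(M):G(M)]=|\pi_1(Y)|$ is finite and $\rank(G(M))=\rank(\pi_1(L))=:r$. Because $\pi_1(L)$ is finitely generated abelian (H-spaces have abelian $\pi_1$), it contains a free abelian subgroup $\Z^r\subset G(M)$, so Theorem~\ref{new2} gives $r\le\dcat(M)$.

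To upgrade to a strict inequality, I would argue by contradiction. If $\dcat(M)=r=\rank(G(M))$, then since $G(M)$ has finite index in $\pi_1(M)$, Theorem~\ref{new2}(3) would force $M\simeq T^n$ with $n=\dim(M)$, and thus $\pi_1(M)\cong\Z^n$ is torsion-free. But $\pi_1(M)=\pi_1(L)\times\pi_1(Y)$ inherits torsion from the nontrivial finite group $\pi_1(Y)$, a contradiction. Hence $\dcat(M)>r$.

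The main delicate point is the implicit nontriviality of $\pi_1(Y)$ in the phrasing ``$\pi_1(Y)$ finite with trivial center'' — without it, the strict inequality fails (for instance, $L=T^r$ with $Y$ a point satisfies the remaining hypotheses but gives $\dcat(M)=r$). Once this is acknowledged, everything reduces to a direct application of the machinery set up in Theorem~\ref{new2}.
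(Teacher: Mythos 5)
Your proposal is correct and follows essentially the same route as the paper: compute $G(M)=G(L)\oplus G(Y)=\pi_1(L)$ via the Gottlieb properties, note it has finite index in $\pi_1(M)$, and invoke Theorem~\ref{new2}(3); your contradiction step (equality would force $M\simeq T^n$, hence torsion-free $\pi_1(M)$) is just an explicit unfolding of the paper's ``since $M$ is not a torus'' remark. Your observation about the implicit nontriviality of $\pi_1(Y)$ (the degenerate case $Y=\ast$, $L=T^r$) is a fair and accurate caveat, matching the tacit assumption in the paper's own proof.
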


\begin{proof}
First, note that such a $Y$ always exists since any finitely presented group can be realized as the fundamental group of a closed $4$-manifold.
For such an $L$, we have $G(L) = \pi_1(L)$. Because $\cZ\pi_1(Y)=\{1\}$, $G(M)=G(L)=\pi_1(L)$ has finite index in $\pi_1(M)$. 
By Theorem~\ref{new2} (3), since $M$ is not a torus, we must have $\rank(\pi_1(L))= \rank(G(M)) < \dcat(M)$.
\end{proof}

The shortness of the proof of Corollary~\ref{cor:Grealize} belies the fact that there does not seem to be another way of seeing $\dcat(M) > \rank(\pi_1(L))$. Note that, in particular, if $L = T^k$ above, we get $\dcat(M) > k$. Moreover,  the closed Lie group
$L$ can be chosen such that $\pi_1(L)$ is any given finitely generated abelian group using tori and the fact that the projective
special unitary group has $\pi_1(PSU(n)) \cong \Z/n\Z$.

\begin{remark}
If a torus $T^k$ acts on a closed $n$-manifold $M$ such that the map $\O_*: \pi_1(T^k)\cong\Z^k\to \pi_1(M)$ induced by the 
orbit map $\O$ is a monomorphism, the action is called an \textit{injective toral action}. These were studied extensively in~\cite{CR}. By (3) of Section~\ref{gott}, any such action has  $\text{Im}(\mathcal{O}_*) \subset G(M)$. Thus, $G(M)$ contains a copy of $\Z^k$.
So, we have the situation of Theorem~\ref{new2} and there is an associated covering $M'\simeq T^k\times N$ with $N$ simply connected. It was shown in~\cite{Op2}
that $\cat(M)\ge k+\cu(N)$, and $\cat(M)=k$ holds if and only if $k=n$ and
$M\simeq T^n$. Any flat manifold $M$ has an injective toral action by $T^k$, where $k=b_1(M)$. For $\dcat$, however, we only 
have conclusions as in Theorem~\ref{new2} for injective total actions.
\end{remark}

\begin{example}\label{exam:Grealize}\rm{
In~\cite{OS}, it was observed that any finitely generated abelian group can arise as the Gottlieb group of a closed
manifold. The construction is very simple. If $A = \Z^k \oplus \bigoplus_{i=1}^n \Z/s_i\Z$, then let
\[
M = T^k \times \prod_{i=1}^n S^3/(\Z/s_i\Z)
\]
where $\Z/s_i \Z \subset S^1 \subset S^3$ gives the factor space $S^3/(\Z/s_i\Z)$. For this type of factor space, the Gottlieb group is the center of $\Z/s_i\Z$ by \cite{Lang},~\cite{OpG}, so we see that $G(S^3/(\Z/s_i\Z))\cong \Z/s_i\Z$ itself. By Property (4) in Section~\ref{gott}, we then have $G(M)\cong A$. But now we notice the following two things about $M$.

First, since each quotient $S^3/(\Z/s_i\Z)$ is a closed orientable $3$-manifold, we have $H^3(S^3/(\Z/s_i\Z);\Q)\cong\Q$ and 
$H^k(S^3/(\Z/s_i\Z);\Q)=\{1\}$ in degrees $1$ and $2$ (this follows from the transfer in Remark~\ref{transfer}). Thus, $\cu(M)=k + n$. 
Therefore, we have that 
\[
\rank(G(M)) = k < k + n = \cu(M) \leq \dcat(M).
\]
In particular, the difference between $\rank(G(M))$ and $\dcat(M)$ can be arbitrarily large. Also, here we see an explicit example
where $\rank(G(M))=\dcat(M)$ holds exactly when $n=0$ and $M = T^k$ as in (3) of Theorem~\ref{new2}.

The second observation is that $M$ has non-negative Ricci curvature if $T^k$ is taken to be flat and each $S^3/(\Z/s_i\Z)$ has the 
associated spherical metric. Then Theorem~\ref{new1} gives the estimate
\[
k=b_1(M) \leq \dcat(M) - \cu\left(\prod_{i=1}^n S^3\right) = \dcat(M)-n,
\]
so we see the inequality $k+n \leq \dcat(M)$ from this viewpoint as well.
}
\end{example}

\begin{example}\label{exam:homogenspaces}\rm{
Suppose $L$ is a compact connected Lie group and $H$ is a connected closed subgroup. Then there is a Barratt--Puppe sequence
\[
\Omega(BH)\simeq H \to \Omega(BL) \simeq L \to L/H \to BH \to BL,
\]
so that the connecting homomorphism of the fibration $L/H \to BH \to BL$ may be taken to be $\partial_*\colon \pi_1(L) \cong \pi_1(\Omega(BL)) \cong \pi_2(BL) \to \pi_1(L/H)$.
This implies that $\text{Im}(\partial_*) \subset G(L/H)$ --- see Property (5) in Section~\ref{gott}. Since $H$ is connected, $\partial_*$ is an epimorphism. Thus, $G(L/H) \cong \pi_1(L/H)$.

As an explicit example, take $L=T^k \times K$ for a compact connected Lie group $K$, and let $H=S^1 \times C$, where $S^1$ is a factor of 
$T^k$ and $C$ is the maximal torus of $K$. Then 
\[
L/H \cong T^{k-1} \times K/C
\]
and $K/C$ is a closed simply connected K\"ahler manifold. Therefore, we have that $\cu(L/H)=\cu(T^{k-1})+\cu(K/C)=k-1+
\tfrac{\dim(K)-\rank(K)}{2} \leq \dcat(L/H)$, while
\[
\dcat(L/H) \leq \cat(L/H) \leq \cat(T^{k-1}) + \cat(K/C) = k-1 + \frac{\dim(K)-\rank(K)}{2}.
\]
Hence, we have the inequality
\[
\rank(G(L/H)) = k-1 < k-1 + \frac{\dim(K)-\rank(K)}{2}=\dcat(L/H).
\]
Here, we again have a construction where the difference between $\rank(G(L/H))$ and $\dcat(L/H)$ is arbitrary.
}
\end{example}

In the following section, we will refer to the cover $X'\simeq T^k\times Y$ mentioned in the proof of Theorem~\ref{new2} as a
\emph{Hurewicz splitting} of $X$ corresponding to $A\subset G(X)$.

\section{Extensions for $c$-symplectic manifolds}
Our results from Section~\ref{genres} illustrate that the techniques used to lower bound LS-category (see~\cite{Op1},~\cite{Op2}) by $b_1(M)$ and $\rank(G(X))$ remain useful in lower bounding distributional category as well. So, it is natural to ask whether $\dcat=\cat$ in the settings of Theorems~\ref{new1} and~\ref{new2}. In this section, we address this interesting question by explaining how our results from Section~\ref{genres} can be refined in the setting of closed $c$-symplectic manifolds and $\dcat=\cat$ can be obtained.

\subsection{Manifolds with non-negative Ricci curvature}\label{c-symp non-neg Ricci}

\begin{proposition}\label{newnew1}
Let $M$ be a closed $c$-symplectic Riemannian $2n$-manifold with non-negative Ricci curvature such that $\pi_1(M)$ is infinite.
If $M'\cong T^{2k}\times W$ is a Cheeger--Gromoll splitting of $M$, then
\[
b_1(M)\le 2k\le 2\dcat(M)-\dim(M).
\]
Moreover, $\dcat(M')=\cat(M')=n+k$.
\end{proposition}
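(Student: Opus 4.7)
The plan is to transport everything through the finite Cheeger--Gromoll cover $M' = T^{2k} \times W$, using Lemma~\ref{imp1} to extract the $c$-symplectic information on the factors, and then concatenate the cup-length lower bound for $\dcat$ with the dimension/connectivity upper bound for $\cat$.

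First I would apply Lemma~\ref{imp1} to the covering map $p : M' \to M$. This supplies the two facts that do all the work: the toral factor has even rank (so writing it as $T^{2k}$ is justified), and the pulled-back $c$-symplectic class decomposes as $w = w_1 \oplus w_2$ with $w_1^k \neq 0$ in $H^2(T^{2k};\Q)$ and $w_2^{n-k} \neq 0$ in $H^2(W;\Q)$. In particular, $W$ is a closed simply connected $c$-symplectic $(2n-2k)$-manifold, so $\cu(W) \ge n-k$.

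For the left inequality $b_1(M) \le 2k$, I would use the transfer (Remark~\ref{transfer}) to get $b_1(M) \le b_1(M')$, and since $W$ is simply connected, $b_1(M') = b_1(T^{2k}) = 2k$. For the right inequality $2k \le 2\dcat(M) - \dim(M)$, which is equivalent to $n+k \le \dcat(M)$, I would combine K\"unneth-additivity of rational cup-length with the lower bound above to get $\cu(M') \ge \cu(T^{2k}) + \cu(W) \ge 2k + (n-k) = n+k$, then feed this into Theorem~\ref{djth}: $\dcat(M) \ge \dcat(M') \ge \cu(M') \ge n+k$.

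For the ``moreover'' part, it remains only to bound $\cat(M')$ from above. Simple-connectivity of $W$ combined with Theorem~\ref{clotth}(2) gives $\cat(W) \le \dim(W)/2 = n-k$, and then the product inequality of Theorem~\ref{clotth}(3) yields $\cat(M') \le \cat(T^{2k}) + \cat(W) \le 2k + (n-k) = n+k$. Since $n+k \le \cu(M') \le \dcat(M') \le \cat(M') \le n+k$, every inequality collapses to equality, giving $\dcat(M') = \cat(M') = n+k$. I do not expect a serious obstacle here: the argument is a clean concatenation of Lemma~\ref{imp1}, Theorem~\ref{djth}, and Theorem~\ref{clotth}, with Lemma~\ref{imp1} carrying the main weight by forcing the cup-length lower bound on the simply connected factor $W$.
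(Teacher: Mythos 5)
Your proposal is correct and follows essentially the same route as the paper: the paper likewise gets $\cu(M') = 2k + \cu(W) = n+k$ (quoting $\cu(W)=\cat(W)=n-k$ for the simply connected $c$-symplectic factor, which you re-derive from Lemma~\ref{imp1} and Theorem~\ref{clotth}(2)), feeds it into $\dcat(M)\ge\dcat(M')\ge\cu(M')$, and closes the ``moreover'' part with the same product bound $\cat(M')\le\cat(T^{2k})+\cat(W)=n+k$. The only difference is cosmetic: the paper handles $b_1(M)\le 2k$ and the form of the splitting by citing Theorem~\ref{opth1}, whereas you unpack the transfer argument explicitly.
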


\begin{proof}
First, we note that such a splitting exists, see Theorems~\ref{cgth} and~\ref{opth1}. Since $W$ is a closed simply-connected 
$c$-symplectic $(2n-2k)$-dimensional manifold, we have $\cu(W)=\cat(W)=n-k$,
see Section~\ref{prelimcsympl}. Thus, we get
\[
2\dcat(M)\ge 2\dcat(M')\ge 2\cu(M')=4k+2(n-k)=2k+\dim(M)
\]
in view of Theorem~\ref{djth}.

To show the second part, since $n+k = \cu(M') \leq \dcat(M')\le\cat(M')$, it suffices to prove that $\cat(M')\le n+k$.
Of course, Theorem~\ref{clotth} gives us
\[
\cat(M')\le \cat(T^{2k})+\cat(W)=2k+(n-k)= n+k.
\]
\end{proof}

\begin{remark}
    We note that without the non-negative Ricci curvature hypothesis on $c$-symplectic manifolds $M$ having infinite fundamental group, even the (relatively) weaker inequality $b_1(M)\le\dcat(M)$ from Theorem~\ref{new1} fails.
To see this, consider the $n$-th symmetric product $SP^n(M_g)$ of the closed orientable surface $M_g$ of genus $g>n\ge 2$, which is $c$-symplectic but cannot support a Riemannian metric of non-negative Ricci curvature,~\cite[Section 5]{DDJ}. It follows from~\cite[Section 6]{DDJ} that 
    \[
b_1(SP^n(M_g))=2g>2n=\dcat(SP^n(M_g))=\dim(SP^n(M_g)).
    \]
\end{remark}
For a proof of the following classical result, we refer to~\cite[Theorem VIII.3.1]{Br}.

\begin{theorem}[\protect{Serre}]\label{serreth}
Let $\Gamma'$ be a finite index subgroup of $\Gamma$. If $\Gamma$ is torsion-free, then $\cd(\Gamma)=\cd(\Gamma')$.
\end{theorem}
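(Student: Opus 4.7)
The plan is to prove the two inequalities $\cd(\Gamma') \le \cd(\Gamma)$ and $\cd(\Gamma) \le \cd(\Gamma')$ separately; the first is entirely formal, while the torsion-free hypothesis is used only in the second. For the easy direction, take any projective $\Z\Gamma$-resolution $P_\bullet \to \Z$. Choosing a set of coset representatives of $\Gamma/\Gamma'$ exhibits $\Z\Gamma$ as a free (hence projective) $\Z\Gamma'$-module, so each $P_i$ remains projective over $\Z\Gamma'$ by restriction. Thus $P_\bullet \to \Z$ is also a $\Z\Gamma'$-projective resolution, and $\cd(\Gamma') \le \cd(\Gamma)$ follows. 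Note that this step uses neither torsion-freeness nor finite index.

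For the nontrivial direction, set $n := \cd(\Gamma')$ and assume $n < \infty$ (otherwise there is nothing to prove). Take any projective $\Z\Gamma$-resolution $\cdots \to P_n \to P_{n-1} \to \cdots \to P_0 \to \Z$ of $\Z$, and let $K = \ker(P_{n-1} \to P_{n-2})$. The standard dimension-shifting argument, applied to the $\Z\Gamma'$-projective resolution obtained by restriction in the previous paragraph, shows that $K$ is projective as a $\Z\Gamma'$-module. The theorem then reduces to Serre's key lemma: if $\Gamma$ is torsion-free and $[\Gamma:\Gamma']$ is finite, then any $\Z\Gamma$-module that is projective over $\Z\Gamma'$ is already projective over $\Z\Gamma$. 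Granting this lemma, the sequence $0 \to K \to P_{n-1} \to \cdots \to P_0 \to \Z \to 0$ is a projective $\Z\Gamma$-resolution of $\Z$ of length $n$, so $\cd(\Gamma) \le n$.

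The main obstacle is Serre's lemma itself, and this is exactly where torsion-freeness is indispensable. The standard route combines the natural $\Z\Gamma$-linear surjection $\mathrm{Ind}_{\Gamma'}^{\Gamma} K \to K$ (which splits as a map of $\Z\Gamma'$-modules because $K$ is $\Z\Gamma'$-projective) with an averaging procedure over the finite set $\Gamma/\Gamma'$ of coset representatives. Carrying this through produces the desired $\Z\Gamma$-linear splitting \emph{modulo} obstructions coming from nontrivial finite subgroups of $\Gamma$, which are precisely what prevents projectivity in general and what forces $\cd(F) = \infty$ for any nontrivial finite group $F$. Torsion-freeness rules out all such subgroups, while the finite-index hypothesis is what makes induction and coinduction from $\Gamma'$ agree and hence makes the averaging map available in the first place. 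The necessity of torsion-freeness is dramatic: without it, any nontrivial finite group $F$ contains the trivial subgroup of finite index but has $\cd(F) = \infty$, so the lemma and the theorem both fail spectacularly.
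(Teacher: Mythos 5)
Your reduction of the theorem to the statement you call ``Serre's key lemma'' is where the proposal breaks down, because that lemma is not a standard quotable fact and your sketch of it does not work. If $K$ is $\Z\Gamma'$-projective, the counit $\pi\colon \mathrm{Ind}_{\Gamma'}^{\Gamma}K \to K$ does split $\Z\Gamma'$-linearly, say by $s$, but the averaged map $\tilde{s}(m)=\sum_{t\in T} t\cdot s(t^{-1}m)$ satisfies $\pi\circ\tilde{s}=[\Gamma:\Gamma']\cdot\mathrm{id}_K$, not $\mathrm{id}_K$, and the index is not invertible in $\Z$. So the averaging procedure fails for \emph{every} group, torsion-free or not; there is no step in this argument where the only obstruction is ``nontrivial finite subgroups,'' and torsion-freeness cannot rescue it. Worse, the lemma you are invoking --- for torsion-free $\Gamma$ and finite-index $\Gamma'$, every $\Z\Gamma$-module that is projective over $\Z\Gamma'$ is projective over $\Z\Gamma$ --- is strictly stronger than Serre's theorem (it implies it instantly via the syzygy argument you give) and is essentially the torsion-free case of Moore's conjecture, which is known only under extra hypotheses (e.g.\ for groups in Kropholler's class $\mathbf{LH}\mathfrak{F}$, by Aljadeff--Cornick--Ginosar--Kropholler) and not for arbitrary torsion-free groups. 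Deducing Serre's theorem from it therefore reverses the usual logical order and leaves the heart of the proof missing. Your easy direction $\cd(\Gamma')\le\cd(\Gamma)$ and the dimension-shifting identification of the $n$-th kernel as $\Z\Gamma'$-projective are both fine.

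For comparison: the paper does not prove the statement at all, but cites Brown, Theorem VIII.3.1, and the classical argument there is genuinely different from your route. Starting from a projective $\Z\Gamma'$-resolution of $\Z$ of finite length (or a finite-dimensional contractible free $\Gamma'$-complex), one forms the $[\Gamma:\Gamma']$-fold tensor product over $\Z$ of its conjugates, indexed by $\Gamma/\Gamma'$ --- equivalently, the coinduced object $\prod_{\Gamma/\Gamma'}$ --- on which $\Gamma$ acts by permuting the factors and acting inside them. Every stabilizer meets each conjugate of $\Gamma'$ trivially and is therefore finite, and torsion-freeness is used exactly here: it forces these finite stabilizers to be trivial, so one obtains a finite-length acyclic complex of projective $\Z\Gamma$-modules resolving $\Z$, whence $\cd(\Gamma)<\infty$. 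One then concludes $\cd(\Gamma)\le\cd(\Gamma')$ by applying the right-exact functor $H^{m}(\Gamma,-)$, $m=\cd(\Gamma)$, to the surjection $\mathrm{Ind}_{\Gamma'}^{\Gamma}\mathrm{Res}_{\Gamma'}M\to M$ for $M$ with $H^{m}(\Gamma,M)\ne 0$, and using Shapiro's lemma together with the identification of induction and coinduction for finite-index subgroups. So your instinct that torsion-freeness kills finite subgroups is the right one, but it is implemented through tensor induction of a resolution of $\Z$, not through a splitting-by-averaging of induced modules; to repair your write-up you would need to replace the ``key lemma'' paragraph by this construction (or simply cite Brown as the paper does).
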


In the case when $\pi_1(M)$ is torsion-free, we can say more than Proposition~\ref{newnew1} by using Serre's theorem. 
This is a prime example of the type of geometric condition leading to the equality $\dcat=\cat$.

\begin{theorem}\label{newnewnew1}
Let $M$ be a closed $c$-symplectic Riemannian $2n$-manifold with non-negative Ricci curvature such that $\pi_1(M)$ is torsion-free.
If $M'\cong T^{2k}\times W$ is a Cheeger--Gromoll splitting of $M$, then
\[
\dcat(M)=\cat(M)=n+k.
\]
\end{theorem}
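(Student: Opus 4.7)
The plan is as follows. Proposition~\ref{newnew1} already gives $\dcat(M) \geq n+k$, and Theorem~\ref{djth}(1) gives $\dcat(M) \leq \cat(M)$; so the theorem reduces to establishing the upper bound $\cat(M) \leq n+k$.

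For the upper bound, the first step is to exploit torsion-freeness. Since $\pi_1(M') = \Z^{2k}$ has finite index in $\pi_1(M)$, Serre's theorem (Theorem~\ref{serreth}) gives $\cd(\pi_1(M)) = \cd(\Z^{2k}) = 2k$, so $\pi_1(M)$ is a Bieberbach group and hence equals $\pi_1(N)$ for some closed flat Riemannian $2k$-manifold $N$. By the de Rham decomposition theorem, the isometric $\pi_1(M)$-action on $\widetilde{M} = \R^{2k} \times W$ preserves the product splitting (since $W$ carries no Euclidean de Rham factor, being the non-flat part of the Cheeger--Gromoll decomposition), so the deck group $G = \pi_1(M)/\Z^{2k}$ acts diagonally on $M' = T^{2k} \times W$. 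Torsion-freeness then forces the $G$-action on the torus factor to be free: if a nontrivial $g \in G$ fixed a point of $T^{2k}$, modifying any lift by an appropriate $\Z^{2k}$-translation would yield a lift $\tilde g \in \pi_1(M)$ of $g$ fixing a point of $\R^{2k}$; its $|G|$-th power $\tilde g^{|G|} \in \Z^{2k}$ would then be a translation with a fixed point, hence trivial, so $\tilde g$ would have order dividing $|G|$, contradicting the torsion-freeness of $\pi_1(M)$. Consequently $N = T^{2k}/G$ is a closed flat $2k$-manifold, and the $G$-equivariant projection $T^{2k} \times W \to T^{2k}$ descends to a locally trivial fiber bundle $W \to M \to N$ with finite monodromy factoring through $G \subset \mathrm{Isom}(W)$.

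The final step would be to invoke a product-type bound $\cat(M) \leq \cat(N) + \cat(W)$ for this bundle. Here $\cat(N) = \cd(\pi_1(N)) = 2k$ by Theorem~\ref{kwth}, since $N$ is aspherical with torsion-free fundamental group; and $\cat(W) = n-k$, since $W$ is a closed simply connected $c$-symplectic $(2n-2k)$-manifold (see Section~\ref{prelimcsympl}). This yields $\cat(M) \leq 2k + (n-k) = n+k$, which combined with the lower bound gives $\dcat(M) = \cat(M) = n+k$. The main obstacle is justifying this product-type bound, since \v{S}varc's general fibration inequality $\cat(M) \leq (\cat(N)+1)(\cat(W)+1) - 1$ is too weak. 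The natural route is to construct a categorical cover of $M$ of the optimal size $n+k+1$ by descending a $G$-invariant cover of $M' = T^{2k} \times W$ (produced from the product cover underlying Theorem~\ref{clotth}(3)) through the free $G$-action, using the fact that the monodromy factors through the finite group $G$ to ensure the resulting open sets in $M$ remain categorical.
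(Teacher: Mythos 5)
Your reduction to the upper bound $\cat(M)\le n+k$ is correct (the lower bound via Proposition~\ref{newnew1} and $\dcat\le\cat$ matches the paper), and your use of Serre's Theorem~\ref{serreth} to get $\cd(\pi_1(M))=2k$ is also on target. But the heart of the theorem --- the inequality $\cat(M)\le n+k$ --- is not actually proved in your proposal. You reduce it to a product-type bound $\cat(M)\le\cat(N)+\cat(W)$ for the fiber bundle $W\to M\to N$ over a closed flat manifold $N$, and you acknowledge that no general fibration inequality gives this; the route you then sketch (descend a $G$-invariant categorical cover of $M'=T^{2k}\times W$ through the free finite action) does not work as stated. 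The image in $M$ of an open set that is contractible in $M'$ need not be contractible in $M$: LS-category can strictly increase when passing from a finite cover to its quotient (e.g.\ $\cat(S^n)=1<n=\cat(\R P^n)$, and the image of the categorical set $S^n\setminus\{\pm x_0\}$ is $\R P^n$ minus a point, which is not contractible in $\R P^n$). Making the cover $G$-invariant does not resolve this tension, so the final step is a genuine gap, not a routine verification. (There are also smaller unaddressed points in the geometric setup --- e.g.\ the Cheeger--Gromoll diffeomorphism $M'\cong T^{2k}\times W$ need not be isometric, $\Z^{2k}$ need not act trivially on the $W$-factor of $\widetilde M$, and normality of $\Z^{2k}$ in $\pi_1(M)$ must be arranged --- but these are repairable; the missing category bound is not.)

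The paper avoids all of this: after the same lower bound, it quotes Dranishnikov's theorem from~\cite{Dr1}, $\cat(M)\le\tfrac{\dim(M)+\cd(\pi_1(M))}{2}$, and combines it with Serre's Theorem~\ref{serreth} (which you already invoked) to get $\cat(M)\le\tfrac{2n+2k}{2}=n+k$ immediately, with no need for the bundle structure, the flat base $N$, or any categorical-cover descent. So the ingredient you are missing is precisely this upper bound for $\cat$ in the presence of the fundamental group; if you want to keep your geometric picture, you would still need either that theorem or a proved (not conjectured) fibration bound of the form $\cat(M)\le\cat(N)+\cat(W)$ for such bundles, which is exactly the hard point your sketch leaves open.
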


\begin{proof}
Because of Proposition~\ref{newnew1} and Theorem~\ref{clotth} above, we have the inequality
$n+k=\dcat(M')=\cat(M')\le \cat(M)$.
The main result of~\cite{Dr1} says that
\[
\cat(M)\le\frac{\dim(M)+\cd(\pi_1(M))}{2} = \frac{2n+\cd(\pi_1(M))}{2}.
\]
Since $\pi_1(M)$ is torsion-free and $M'$ is a finite cover of $M$ (see Theorem~\ref{cgth}),
Serre's Theorem~\ref{serreth} implies that $\cd(\pi_1(M))=\cd(\pi_1(M'))$. Since $W$ is simply connected,
$\pi_1(M')\cong\pi_1(T^{2k})\cong\Z^{2k}$. Hence, $\cd(\pi_1(M))=2k$. Therefore, we get from the above two inequalities that 
$\cat(M)=n+k$. By Proposition~\ref{newnew1} and Theorem~\ref{djth}, we have
\[
n+k = \dcat(M') \leq \dcat(M) \leq \cat(M)=n+k.
\]
This completes the proof.
\end{proof}

The following is an immediate consequence of the argument in Theorem~\ref{newnewnew1}.

\begin{corollary}
Let $M$ be a closed $c$-symplectic $2n$-manifold with non-negative Ricci curvature such that $\pi_1(M)$ is torsion-free.
Then $\cd(\pi_1(M))$ is even and is the rank of the torus in the Cheeger--Gromoll splitting of $M$ implied by Theorem~\ref{cgth}.
\end{corollary}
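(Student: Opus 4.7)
The plan is to extract the relevant identity that was already established inside the proof of Theorem~\ref{newnewnew1}. Nothing new is needed; I will just reorganize those computations into a free-standing statement.

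First, I would invoke the Cheeger--Gromoll splitting (Theorem~\ref{cgth}) to get a finite cover $M' = T^r \times W$ of $M$ with $W$ closed and simply connected. Because $M$ is $c$-symplectic, Lemma~\ref{imp1} immediately forces $r = 2k$ to be even, so the torus factor has even rank. This handles the parity claim.

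Next, since $W$ is simply connected, $\pi_1(M') = \pi_1(T^{2k}) = \Z^{2k}$, so $\cd(\pi_1(M')) = 2k$. Now I would apply Serre's Theorem~\ref{serreth}: the hypothesis that $\pi_1(M)$ is torsion-free, together with the fact that $\pi_1(M')$ is a finite-index subgroup of $\pi_1(M)$ (because $M' \to M$ is a finite cover), gives $\cd(\pi_1(M)) = \cd(\pi_1(M')) = 2k$. This shows $\cd(\pi_1(M))$ equals the rank of the torus factor in the Cheeger--Gromoll splitting, completing the corollary.

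There is no real obstacle here; both ingredients (the parity from Lemma~\ref{imp1} and the cohomological dimension identity from Serre's theorem) are already assembled in the proof of Theorem~\ref{newnewnew1}. The only ``work'' is to package them as a stand-alone statement about the splitting.
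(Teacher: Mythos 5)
Your proposal is correct and follows exactly the paper's route: the corollary is stated as an immediate consequence of the argument in Theorem~\ref{newnewnew1}, which combines the parity $r=2k$ from Lemma~\ref{imp1} with $\cd(\pi_1(M))=\cd(\pi_1(M'))=\cd(\Z^{2k})=2k$ via Serre's Theorem~\ref{serreth} applied to the finite cover $M'\to M$. Nothing is missing.
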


\begin{remark}\label{rem:pi1Bieberbach}
By our earlier discussion of flat manifolds in Section~\ref{genres}, we see that imposing the condition in Theorem~\ref{newnewnew1} that 
$\pi_1(M)$ be torsion-free is equivalent to saying $\pi_1(M)$ is a Bieberbach group. This means that the classifying 
map of the universal cover $M \to K(\pi_1(M),1)$ may be taken to be a map $M \to N$ where $N$ is flat. We ask the
question of whether such a map has any topological consequences!
\end{remark}

\subsection{The Gottlieb group}
We begin by noting that from the proof of Lemma~\ref{imp1}, if $M$ is a closed $c$-symplectic $2n$-manifold,
$A$ is a finitely generated free abelian \emph{finite index} subgroup of $\pi_1(M)$, and $M'\simeq T^r\times Y$ is a 
Hurewicz splitting of $M$ corresponding to $A$, then $r=2k$ and $M'$ is a closed $c$-symplectic $2n$-manifold.

\begin{proposition}\label{newnew2}
Let $M$ be a closed $c$-symplectic $2n$-manifold. If $A\subset G(M)$ is a finitely generated, free abelian finite index subgroup of $\pi_1(M)$
and $M'\simeq T^{2k}\times Y$ is a Hurewicz splitting of $M$ corresponding to $A$, then $\dcat(M')=\cat(M')=n+k$.
\end{proposition}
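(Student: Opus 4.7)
The plan is to mirror the proof of Proposition~\ref{newnew1}, but starting from the Hurewicz splitting instead of the Cheeger--Gromoll splitting, and exploiting Lemma~\ref{imp1} to transfer the $c$-symplectic structure to each factor.

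First, I would record the setup. Since $A$ has finite index in $\pi_1(M)$, the Hurewicz splitting $M' \simeq T^{2k} \times Y$ is a finite cover of $M$, and by the paragraph immediately preceding the proposition (which invokes Lemma~\ref{imp1}), $M'$ is itself a closed $c$-symplectic $2n$-manifold, with the symplectic class pulling back to $w_1 \oplus w_2$ satisfying $w_1^k \neq 0$ in $H^*(T^{2k};\Q)$ and $w_2^{n-k} \neq 0$ in $H^*(Y;\Q)$. Since $Y$ is simply connected (by construction in the proof of Theorem~\ref{new2}) and closed of dimension $2n-2k$, Lemma~\ref{imp1} tells me that $Y$ is a closed simply connected $c$-symplectic $(2n-2k)$-manifold.

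Next, I would use the known calculation recalled in Section~\ref{prelimcsympl}: for a closed simply connected $c$-symplectic $(2n-2k)$-manifold $Y$, one has $\cu(Y) = \dcat(Y) = \cat(Y) = n-k$. Combined with $\cu(T^{2k}) = \cat(T^{2k}) = 2k$, the Künneth formula gives
\[
\cu(M') = \cu(T^{2k}) + \cu(Y) = 2k + (n-k) = n+k,
\]
and the product inequality from Theorem~\ref{clotth} gives
\[
\cat(M') \le \cat(T^{2k}) + \cat(Y) = 2k + (n-k) = n+k.
\]

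Finally, I would chain the standard inequalities from Theorem~\ref{djth}:
\[
n+k \;=\; \cu(M') \;\le\; \dcat(M') \;\le\; \cat(M') \;\le\; n+k,
\]
which forces equality throughout and yields $\dcat(M') = \cat(M') = n+k$, as claimed.

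There is no real obstacle here; the only substantive ingredient is ensuring that the simply connected factor $Y$ inherits a $c$-symplectic structure, and that is exactly what Lemma~\ref{imp1} (together with the preceding paragraph of the paper) supplies. Everything else is an application of the cup-length lower bound for $\dcat$ from Theorem~\ref{djth} and the product upper bound for $\cat$ from Theorem~\ref{clotth}.
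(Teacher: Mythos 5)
Your proposal is correct and follows essentially the same route as the paper: invoke Lemma~\ref{imp1} (and the simple connectivity of $Y$ from the proof of Theorem~\ref{new2}) to see that $Y$ is a closed simply connected $c$-symplectic $(2n-2k)$-manifold with $\cu(Y)=\cat(Y)=n-k$, then sandwich $\dcat(M')$ between the cup-length lower bound of Theorem~\ref{djth} and the product upper bound of Theorem~\ref{clotth}. The only cosmetic difference is that you assert the Künneth equality $\cu(M')=\cu(T^{2k})+\cu(Y)$, while the paper only uses the inequality $2k+\cu(Y)\le\cu(M')$, which suffices.
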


\begin{proof}
First, we note from the proof of Theorem~\ref{new2} that $Y$ is simply connected. Furthermore,
$M'$ and $Y$ are $c$-symplectic manifolds of dimensions $2n$ and $2n-2k$, respectively, by Lemma~\ref{imp1}. Thus, we have that
$n-k=\cu(Y)=\cat(Y)$. Finally, Theorems~\ref{clotth} and~\ref{djth} give us the following series of inequalities:
\[
n+k= 2k+\cu(Y)\le \cu(M')\le\dcat(M')\le\cat(M')\le 2k+\cat(Y)=n+k.
\]
\end{proof}

\begin{corollary}\label{cor:nocenter}
If $M^{2n} = T^{2k} \times Y$ for a closed symplectic manifold $Y$ having finite $\pi_1(Y)$ with a trivial center, then $\dcat(M)\geq n+k$.
\end{corollary}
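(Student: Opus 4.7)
The plan is to set up the hypotheses of Proposition~\ref{newnew2} for $M = T^{2k} \times Y$ with the subgroup $A$ taken to be the Gottlieb group itself, and then invoke Theorem~\ref{djth} to pass from $M'$ up to $M$.

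First I would compute $G(M)$. Since $T^{2k}$ is a (topological) group, it is an $H$-space and hence $G(T^{2k}) = \pi_1(T^{2k}) = \Z^{2k}$ by the properties listed in Section~\ref{gott}. For $Y$, we have $G(Y) \subset \cZ\pi_1(Y) = 0$, so $G(Y) = 0$. By Property (4) of Section~\ref{gott} (multiplicativity of the Gottlieb group under products),
\[
G(M) = G(T^{2k}) \oplus G(Y) = \Z^{2k} \oplus 0 = \Z^{2k}.
\]
Set $A = G(M) = \Z^{2k}$. Because $\pi_1(M) = \Z^{2k} \oplus \pi_1(Y)$ and $\pi_1(Y)$ is finite, $A$ is a free abelian finite index subgroup of $\pi_1(M)$ contained in $G(M)$, as required.

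Next I would identify the corresponding Hurewicz splitting. Since $A$ is the kernel of the projection $\pi_1(M) = \Z^{2k} \oplus \pi_1(Y) \to \pi_1(Y)$, the regular covering of $M$ associated to $A$ is
\[
M' = T^{2k} \times \wt{Y},
\]
where $\wt{Y}$ is the universal cover of $Y$; this is a closed manifold since $\pi_1(Y)$ is finite, and it is simply connected. Moreover, $M$ is $c$-symplectic (the product of the symplectic manifold $Y$ with $T^{2k}$), so the setup of Proposition~\ref{newnew2} is met. That proposition then yields
\[
\dcat(M') = \cat(M') = n + k.
\]

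Finally, since $M' \to M$ is a (finite) covering, Theorem~\ref{djth}(3) gives $\dcat(M') \leq \dcat(M)$, and combining this with the equality above,
\[
\dcat(M) \geq \dcat(M') = n + k = \frac{\dim(M)}{2} + k,
\]
as claimed. There is no real obstacle here; the only point requiring a moment of care is verifying that $G(M)$ is exactly $\Z^{2k}$ (so that it is free abelian of finite index in $\pi_1(M)$), which is precisely where the hypothesis of trivial center on $\pi_1(Y)$ is used.
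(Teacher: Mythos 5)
Your proof is correct and follows essentially the same route as the paper: identify $G(M)=\Z^{2k}$ (free abelian of finite index in $\pi_1(M)$ because $\pi_1(Y)$ is finite with trivial center), apply Proposition~\ref{newnew2} to the Hurewicz splitting $M'\simeq T^{2k}\times\wt Y$, and conclude with the covering inequality $\dcat(M)\ge\dcat(M')$ from Theorem~\ref{djth}. Your explicit verification of $G(M)$ via the product formula and of the splitting $M'=T^{2k}\times\wt Y$ just spells out what the paper's terser proof leaves implicit.
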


\begin{proof}
By~\cite{Gom}, such a manifold $Y$ exists. We have $G(M) \cong  \Z^k$ since $\pi_1(Y)$ has trivial center, and $G(M)$ has finite index in
$\pi_1(M)$ since $\pi_1(Y)$ is finite. By Proposition~\ref{newnew2}, we get $\dcat(M') = \cat(M')= n+ k$. The standard
inequality $\dcat(M) \geq \dcat(M')$ completes the proof.
\end{proof}

As in Section~\ref{c-symp non-neg Ricci}, we can say more using Serre's Theorem~\ref{serreth} and the main result
of~\cite{Dr1} in the case when the groups are torsion-free.

\begin{theorem}\label{newnewnew2}
Let $M$ be a closed $c$-symplectic $2n$-manifold such that $\pi_1(M)$ is torsion-free. If $A\subset G(M)$ is a finitely generated,
free abelian finite index subgroup of $\pi_1(M)$ and $M'\simeq T^{2k}\times Y$ is a Hurewicz splitting of $M$ corresponding to
$A$, then
\[
\dcat(M)=\cat(M)=n+k.
\]
\end{theorem}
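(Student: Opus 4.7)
The plan is to replay the proof of Theorem~\ref{newnewnew1} with Proposition~\ref{newnew2} in place of Proposition~\ref{newnew1}. The Hurewicz splitting $M'\simeq T^{2k}\times Y$ coming from $A$ plays exactly the same structural role as the Cheeger--Gromoll splitting does in the non-negative Ricci setting: it produces a finite covering $M'\to M$, and $Y$ is closed, simply connected, and $c$-symplectic of dimension $2n-2k$. The simple connectivity of $Y$ was already noted in the proof of Theorem~\ref{new2}, and the $c$-symplectic structure on $M'$ and on $Y$ (with $r=2k$) is supplied by Lemma~\ref{imp1}, using the crucial fact that $A$ has finite index in $\pi_1(M)$.

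For the lower bound, Proposition~\ref{newnew2} immediately gives $\dcat(M')=\cat(M')=n+k$. Since $M'\to M$ is a covering map, monotonicity of $\dcat$ and $\cat$ under covers (Theorems~\ref{clotth} and~\ref{djth}) yields
\[
n+k=\dcat(M')\le \dcat(M)\le \cat(M).
\]

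For the upper bound, I would invoke the main inequality of~\cite{Dr1}:
\[
\cat(M)\le \frac{\dim(M)+\cd(\pi_1(M))}{2}=\frac{2n+\cd(\pi_1(M))}{2}.
\]
Since $\pi_1(M)$ is torsion-free and $\pi_1(M')=A$ has finite index in $\pi_1(M)$ by hypothesis, Serre's Theorem~\ref{serreth} gives $\cd(\pi_1(M))=\cd(\pi_1(M'))$. Because $Y$ is simply connected, $\pi_1(M')=\pi_1(T^{2k})=\Z^{2k}$, whence $\cd(\pi_1(M'))=2k$. Substituting gives $\cat(M)\le n+k$, and chaining with the lower bound and the standard inequality $\dcat(M)\le \cat(M)$ yields $\dcat(M)=\cat(M)=n+k$.

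There is no serious obstacle once Proposition~\ref{newnew2} is available; the proof is essentially a transcription of the one for Theorem~\ref{newnewnew1}. The only point to monitor is that the finite-index hypothesis on $A$ is used twice and in essential ways: once through Lemma~\ref{imp1} to ensure that the Hurewicz cover $M'$ is itself a $c$-symplectic $2n$-manifold (so that Proposition~\ref{newnew2} applies and yields equality $n+k$), and once to bring Serre's Theorem to bear on the transfer of cohomological dimension from $\pi_1(M')$ back to $\pi_1(M)$.
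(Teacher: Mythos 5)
Your proof is correct and follows essentially the same route as the paper: Proposition~\ref{newnew2} plus covering monotonicity for the lower bound, and Dranishnikov's inequality combined with Serre's Theorem~\ref{serreth} (using the finite-index and torsion-free hypotheses, with $\pi_1(M')=\Z^{2k}$) for the upper bound $\cat(M)\le n+k$. Nothing is missing.
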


\begin{proof}
Because of Proposition~\ref{newnew2} and Theorem~\ref{clotth} above, we have the inequality $n+k=\dcat(M')=\cat(M')\le\cat(M)$. In light of Theorem~\ref{djth}, it suffices to prove $\cat(M)\le n+k$. Proceeding along the lines of the proof of
Theorem~\ref{newnewnew1}, we get
\[
\cat(M)\le\frac{2n+\cd(\pi_1(M'))}{2}
\]
due to Serre's theorem and Dranishnikov's theorem. But $M'\simeq T^{2k}\times Y$, and $Y$ is simply connected.
So, $\cd(\pi_1(M'))=2k$. This completes the proof.
\end{proof}



Note that in~\cite{BLO}, as part of a wider study of manifolds with almost non-negative sectional curvature, it was shown that a closed
manifold $M$ with non-negative Ricci curvature obeys 
\[
\cat(M) \geq b_1(M) + e_0(\widetilde M),
\]
where $e_0$ is a rational homotopy invariant best described with the machinery of minimal models. Suffice it to say that it is always the case that $\cu(X)
\leq e_0(X)$ for simply connected $X$, and for an $X$ that is \emph{formal} in the sense of rational homotopy theory, $\cu(X) = e_0(X)$. Therefore, our bound 
\[
\dcat(M) \geq b_1(M) + \cu(W)
\]
in Theorem~\ref{new1} is tantalizingly close to saying that $\cat(M)=\dcat(M)$ for all such $M$ (since the Cheeger--Gromoll splitting 
$T^k \times W$ implies $W \simeq \widetilde M$). In both these results, if $\pi_1(M)$ is torsion-free, then $b_1(M)$ may be replaced 
by $\cd(\pi_1(M))$. We leave it to the reader to ponder whether $\dcat(M)=\cat(M)$ for such manifolds.

Finally, note that there is a minimal model formulation of the rational category $\cat_0(X)$, see~\cite{CLOT}. We pose the problem
of finding such a formulation for $\dcat_0(X)=\dcat(X)_\Q$ for $X$ a simply connected space.

\section{Distributional category and macroscopic dimension}\label{sec:macro}
Before we end, we would like to relate the distributional category to another very geometric type of invariant in the Bochner fashion.
The notion of the macroscopic dimension of Riemannian manifolds was introduced by Gromov in~\cite{Gr2}, in part to study 
the problem of the (non-)existence of Riemannian metrics of positive scalar curvature (see also~\cite{Dr3},~\cite{DDJ}). For a Riemannian manifold $X$, 
its \emph{macroscopic dimension}, denoted $\dim_{mc}X$, is the smallest integer $n$ for which there exists a continuous proper map 
$f:X\to K$ to an $n$-dimensional simplicial complex $K$ such that for some $c>0$, $\text{diam}(f^{-1}(y))<c$ for all $y\in K$.

\begin{remark}
By definition, $\dim_{mc}X\le \dim(X)$, and equality holds if $X=\R^n$. If $X$ is a compact manifold, then $\dim_{mc}X=0$. 
So, for a compact manifold $M$, we will talk about the macroscopic dimension of its universal Riemannian cover $\wt{M}$.
\end{remark}

For closed manifolds $M$, a strict improvement to the above inequality was recently obtained in~\cite[Theorem 7.18]{DDJ} 
using LS-category as follows:
    \[
    \dim_{mc}\wt M\le\cat(M)\le\dim(M).
    \]
We further improve this inequality in the case of manifolds having non-negative Ricci curvature\footnote{\hspace{0.5mm}There is a different notion of macroscopic dimension due to Dranishnikov (see~\cite{Dr3}), which is defined using Lipschitz maps instead of continuous maps. For closed manifolds $M$ having non-negative Ricci curvature, these two notions coincide on $\wt M$--- see~\cite[Proposition 7.10]{DDJ}.} using distributional category.

\begin{theorem}\label{dcatmacdim}
Let $M$ be a closed Riemannian $n$-manifold with non-negative Ricci curvature such that $\pi_1(M)$ is infinite. If $M'\cong T^{r}\times W$ is a 
Cheeger--Gromoll splitting of $M$, then
    \[
    \dim_{mc}\wt M\le\dcat(M')\le\dcat(M).
    \]
Moreover, $\dim_{mc}\wt M=\dcat(M)$ if and only if $M$ is flat.
\end{theorem}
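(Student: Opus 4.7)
The plan is to establish the displayed chain of inequalities first, then handle the equality case. The rightmost bound $\dcat(M')\le\dcat(M)$ is immediate from Theorem~\ref{djth}(3), since $M'$ is a finite covering of $M$. For the left bound, I note that $\wt M$ is also the universal cover of $M'$, and by the Cheeger--Gromoll theorem the splitting is isometric: $\wt M=\R^r\times W$, where $\R^r$ carries its standard flat metric and $W$ is a compact simply connected manifold. The projection $\wt M\to\R^r$ is then a proper continuous map onto the $r$-dimensional simplicial complex $\R^r$ whose fibers have diameter $\operatorname{diam}(W)<\infty$, so $\dim_{mc}\wt M\le r$. Combined with the rational Künneth formula and Theorem~\ref{djth}(4),
\[
\dim_{mc}\wt M\le r\le r+\cu(W)=\cu(M')\le\dcat(M').
\]

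For the equality, if $M$ is flat then $M$ is aspherical with a torsion-free (Bieberbach) fundamental group of cohomological dimension $n$, and $\wt M=\R^n$; so $\dim_{mc}\R^n=n$ (as already noted in the remark preceding the statement) and Theorem~\ref{kwth} gives $\dcat(M)=\cd(\pi_1(M))=n$, matching. For the converse, suppose $\dim_{mc}\wt M=\dcat(M)$. Then $\dcat(M')$, sandwiched between them, equals the common value, so the chain above collapses and forces $r+\cu(W)\le r$, i.e., $\cu(W)=0$. Since $W$ is closed simply connected (hence orientable), this requires $\dim W=0$, so $W=\ast$ and $M'=T^n$ with its Cheeger--Gromoll flat metric. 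Passing to the normal core of $\pi_1(M')$ in $\pi_1(M)$ (which is a further finite-index subgroup of $\Z^n$, hence still corresponds to a flat torus cover), I may assume $M'\to M$ is a regular finite cover; then $M$ is the quotient of a flat torus by a finite group of deck transformations acting freely and by isometries, and $M$ inherits a flat metric.

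The subtlest step will be the very last one: producing a genuine flat metric on $M$, rather than only proving that $\pi_1(M)$ is a Bieberbach group. The converse of Bieberbach's theorem as stated in Section~\ref{genres} is restricted to dimensions $n\ne 3,4$, so I want to avoid relying on it. The resolution is that the Cheeger--Gromoll splitting is genuinely isometric with a Euclidean factor, and hence, once $W$ collapses to a point, the cover $M'$ is literally a flat torus; its free quotient by a finite group of deck isometries then directly realizes $M$ as a flat manifold, independent of dimension.
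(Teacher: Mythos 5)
Your proof is correct, and in the converse of the equality case it takes a genuinely different route from the paper. For the chain of inequalities, the paper quotes the proof of \cite[Proposition 7.10]{DDJ} to get $\dim_{mc}\wt M=r$ and then reuses $r+\cu(W)\le\dcat(M')\le\dcat(M)$ from the proof of Theorem~\ref{new1}; you instead build the proper map $\wt M\to\R^r$ with uniformly bounded fibers directly from the isometric splitting of the universal cover, which gives the only thing actually needed, namely $\dim_{mc}\wt M\le r$. For the converse, the paper argues topologically: $\cu(W)=0$ forces $M'\simeq T^n$, hence $M$ is aspherical with torsion-free $\pi_1(M)$ containing $\Z^n$ of finite index; the Vasquez-type characterization~\cite{Va} produces a closed flat manifold $N=K(\pi_1(M),1)$, and topological rigidity (Farrell--Hsiang~\cite{FaHs} for $n\ne 3,4$, Kreck--L\"uck~\cite{KL} for $n=3,4$) upgrades the homotopy equivalence $M\simeq N$ to a homeomorphism. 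You stay entirely Riemannian: once the compact factor of the isometric splitting is a point, $\wt M$ is isometric to flat $\R^n$, so the given metric on $M$ is already flat; this avoids the Bieberbach realization, the surgery/rigidity input, and the dimension case analysis (and your normal-core step is superfluous, since a Riemannian covering by a flat manifold makes the base flat for purely local reasons). Your conclusion is in fact slightly stronger than the paper's (the given nonnegative-Ricci metric is flat, rather than $M$ being homeomorphic to a flat manifold). The one point you should make explicit is that you are invoking a stronger form of Cheeger--Gromoll than Theorem~\ref{cgth} as stated: namely the isometric splitting $\wt M=\R^k\times\overline{M}$ with $\overline{M}$ compact and simply connected, which is indeed in~\cite{CG}, together with the identification of $\overline{M}$ with $W$. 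The paper's $W$ is only given via a diffeomorphism of the finite cover, so strictly you should note that $W\simeq\wt M\simeq\overline{M}$ (and $k=r$, e.g.\ by comparing compactly supported cohomology), whence $\cu(\overline{M})=\cu(W)=0$ and $\overline{M}=\ast$; this is a one-line repair, not a gap.
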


\begin{proof}
The proof of~\cite[Proposition 7.10]{DDJ} implies that $\dim_{mc}\wt M=r$. But we have from the proof of Theorem~\ref{new1} 
that $r+\cu(W)\le\dcat(M')$. Therefore,
    \[
    \dim_{mc}\wt M=r\le r+\cu(W)\le\dcat(M')\le\dcat(M).
    \]
If $M$ is flat, then $M$ is aspherical and $\dim_{mc}\wt{M}=\dim(M)=\dcat(M)$ by~\cite[Corollary 7.7]{DDJ} and Theorem~\ref{kwth}, respectively. Conversely, if $\dim_{mc}\wt M=\dcat(M)$, then we get from the above series of inequalities that $\cu(W)=0$ for the closed simply connected manifold $W$. This means $W\simeq\ast$ and hence, $M'\cong T^n$ (as in the proof of Theorem~\ref{new1}). Therefore, $M$ is a closed aspherical $n$-manifold and $\pi_1(M)$ is torsion-free. Since $\pi_1(M')\cong\Z^n$ has finite index in $\pi_1(M)$, the generalization of Vasquez's characterization~\cite{Va} mentioned in Section~\ref{genres} implies that there exists a closed flat $n$-manifold $N=K(\pi_1(M),1)$. Since $M=K(\pi_1(M),1)$, we must have $M\simeq N$. In the cases $n\ne 3,4$, the results of Farrell--Hsiang~\cite{FaHs} imply that the corresponding \textit{surgery structure set} is zero and so $M\cong N$ (see Section~\ref{genres}). For $n=3,4$,~\cite[Theorems 0.7 \& 0.11]{KL} imply that the flat manifold $N$ is \textit{strongly Borel}, which means the homotopy equivalence $N\to M$ above is homotopic to a homeomorphism. Therefore, in any case, $M$ is flat.
\end{proof}

\begin{remark}
    The fact that $\dim_{mc}\wt M = \dim(M)$ implies the flatness of $M$ was proved in~\cite[Proposition 7.10]{DDJ} in a completely different way, so once we showed in the proof above that $M$ is aspherical, we could have simply applied that result along with Theorem~\ref{kwth}. Nevertheless, it seems worthwhile to see how a more classical approach leads to the flatness conclusion.
\end{remark}

\begin{corollary}\label{similar}
    Suppose $M=N \times Y$ has non-negative Ricci curvature for a closed manifold $Y$ such that $\pi_1(Y)$ is finite and a closed aspherical manifold $N$. Then
    \[
    \dcat(M) > \dim(N).
    \]
\end{corollary}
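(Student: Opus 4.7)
The plan is to deduce this directly from Theorem~\ref{dcatmacdim} applied to $M$, after identifying the Euclidean factor in a Cheeger--Gromoll splitting of $M$ with $\dim N$ and ruling out flatness.

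Since $N$ is a closed aspherical manifold with $\dim N \geq 1$, $\pi_1(N)$ is infinite and hence so is $\pi_1(M) = \pi_1(N) \times \pi_1(Y)$. Thus Theorem~\ref{dcatmacdim} applies: if $M' = T^r \times W$ is a Cheeger--Gromoll splitting of $M$, then $\dim_{mc}\widetilde M = r \leq \dcat(M)$, with equality only if $M$ is flat. So the main task is to show $r = \dim N$ and that $M$ is not flat.

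For $r = \dim N$: the subgroup $\pi_1(M') \cong \Z^r$ has finite index in $\pi_1(M) = \pi_1(N) \times \pi_1(Y)$. Projecting onto $\pi_1(Y)$, the image is contained in a finite group, so the kernel $K := \Z^r \cap (\pi_1(N) \times 1)$ is of finite index in $\Z^r$, hence isomorphic to $\Z^r$. A short index-multiplicativity computation using $[\pi_1(M) : \pi_1(N) \times 1] = |\pi_1(Y)| < \infty$ gives that $K$ also has finite index in $\pi_1(N)$. Since $N$ is closed aspherical, $\pi_1(N)$ is torsion-free with $\cd(\pi_1(N)) = \dim N$. By Serre's theorem (Theorem~\ref{serreth}), $\cd(\pi_1(N)) = \cd(K) = r$, whence $r = \dim N$.

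To see $M$ is not flat: if $\pi_1(Y)$ is nontrivial then $\pi_1(M)$ has torsion, which rules out $M$ being flat (flat manifolds have Bieberbach, hence torsion-free, fundamental groups). If $\pi_1(Y) = 1$ and $Y$ is not a point, then $Y$ is a closed simply connected manifold of positive dimension, therefore not contractible; since $\widetilde M = \widetilde N \times Y$ and $\widetilde N$ is contractible, $\widetilde M$ is not contractible, so $M$ is not aspherical and in particular not flat. In either case Theorem~\ref{dcatmacdim} yields the strict inequality $\dcat(M) > r = \dim N$.

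The main obstacle is the index-counting step identifying $r$ with $\dim N$; one must be careful to extract from $\Z^r \subset \pi_1(N)\times \pi_1(Y)$ a \emph{finite-index} copy of $\Z^r$ inside $\pi_1(N)$ itself, so that Serre's theorem can be applied. The rest is largely bookkeeping of edge cases (the degenerate case where $Y$ is a point is excluded implicitly, since then $M = N$ is aspherical and $\dcat(M) = \dim N$ by Theorem~\ref{kwth}).
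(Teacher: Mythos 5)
Your proof is correct, and its overall skeleton matches the paper's: both arguments reduce to Theorem~\ref{dcatmacdim} together with its equality (flatness) case, and both must (i) identify the relevant macroscopic dimension with $\dim(N)$ and (ii) rule out flatness of $M$. Where you genuinely differ is in step (i). The paper computes $\dim_{mc}\wt M$ directly from the product structure of the universal cover $\wt M=\wt N\times\wt Y$, using subadditivity of $\dim_{mc}$, the vanishing of $\dim_{mc}\wt Y$ for the compact factor, and the asphericity of $N$ to pin down the value $\dim(N)$. You instead work with the Cheeger--Gromoll rank $r$, using $\dim_{mc}\wt M=r$ (note that this equality is established in the \emph{proof} of Theorem~\ref{dcatmacdim}, via the cited result of~\cite{DDJ}, not in its statement, so the citation should point there) and then proving $r=\dim(N)$ purely group-theoretically: extracting a finite-index copy of $\Z^r$ inside $\pi_1(N)$ from $\pi_1(M')\subset\pi_1(N)\times\pi_1(Y)$, and combining $\cd(\pi_1(N))=\dim(N)$ with Serre's Theorem~\ref{serreth}. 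This buys you something concrete: you never need a lower bound for the macroscopic dimension of the cover of the aspherical factor, which the paper's one-line computation leaves implicit, and your index-plus-Serre argument makes the identification airtight. You also supply the verification that $M$ is not flat (torsion in $\pi_1(M)$ when $\pi_1(Y)\ne 1$, non-asphericity when $Y$ is simply connected of positive dimension), which the paper merely asserts, and you correctly flag that the degenerate cases $Y=\ast$ (and likewise $\dim(N)=0$) must be excluded, as they are implicitly in the paper.
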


\begin{proof}
    Observe that $\dim_{mc}\wt M\le\dim_{mc}\wt N + \dim_{mc}\wt Y = \dim(N)$ because $N$ is aspherical and $\wt Y$ is compact. Then by the definition of macroscopic dimension, we must have $\dim_{mc}\wt M=\dim(N)$. But because $M$ is not flat, Theorem~\ref{dcatmacdim} implies $\dim(N)=\dim_{mc}\wt M<\dcat(M)$.
\end{proof}

\begin{remark}
The hypothesis of non-negative Ricci curvature in Theorem~\ref{dcatmacdim} is necessary.
To see this, consider the $g$-th symmetric product $SP^g(M_g)$ of the closed orientable surface $M_g$ of genus $g\ge 2$ and note its following properties from~\cite{DDJ}: 
$SP^g(M_g)$ cannot support a metric of non-negative Ricci curvature, $\dim_{mc}\wt{SP^g(M_g)}=2g=\dcat(SP^g(M_g))$, and $\pi_2(SP^g(M_g))$ is infinite.
\end{remark}

\begin{example}\rm{
We note that for the non-negatively curved manifolds of the type
\[
M = T^k \times \prod_{i=1}^n S^3/(\Z/s_i\Z)
\]
from Example~\ref{exam:Grealize}, we have $\dim_{mc}\Wi M\le k+\sum_{i=1}^n\dim_{mc}S^3=k< k+n\le\dcat(M)$ as in Corollary~\ref{similar}. Therefore, the difference between $\dim_{mc}\Wi M$ and $\dcat(M)$ can be arbitrary. In fact, here we see an explicit example where $\dim_{mc}\wt M = \dcat(M)$ holds exactly when $n=0$ and $M$ is a (flat) torus as in Theorem~\ref{dcatmacdim}.
}
\end{example}

\begin{remark}
    We conclude by pointing out that the arguments of Proposition~\ref{newnew1} and Theorem~\ref{dcatmacdim} give us the improved bound $\dim_{mc}\Wi M\le 2\dcat(M)-\dim(M)$ for any closed $c$-symplectic manifold $M$ having non-negative Ricci curvature and infinite fundamental group. However, the equality $\dim_{mc}\Wi M=2\dcat(M)-\dim(M)$ here does not imply that $M$ is flat (or even aspherical). To see this, consider $M=T^{2k}\times N$ for a closed simply connected symplectic $(2n-2k)$ manifold $N$ with non-negative Ricci curvature. Then, $\dim_{mc}\wt M = 2k$ and $\dcat(M)=n+k$, but $M$ is not aspherical. The same example also shows that in the setting of Proposition~\ref{newnew1}, the equality $b_1(M)= 2\dcat(M)-\dim(M)$ does not imply that $M$ is a torus.
\end{remark}

\section*{Acknowledgement}

The authors thank the anonymous referee for valuable suggestions.

\end{document}